\documentclass[10pt]{article}
\usepackage[utf8]{inputenc}

\usepackage{hyperref}
\usepackage{xcolor}
\usepackage{graphics}
\hypersetup{
  colorlinks,
  linkcolor={blue!80!black},
  urlcolor={blue!80!black},
  citecolor={blue!80!black}
}

\usepackage{amsmath,amssymb,amsthm}
\usepackage{mathrsfs}
\usepackage{graphics}
\usepackage{amsmath}
\usepackage{wrapfig}

\numberwithin{equation}{section}

\usepackage{authblk}

\usepackage{cite} 

\newtheorem{theorem}{Theorem}[section]
\newtheorem{lemma}[theorem]{Lemma}

\newtheorem{proposition}[theorem]{Proposition}

\theoremstyle{definition}
\newtheorem{definition}[theorem]{Definition}

\newcommand{\R}{{\mathbb R}}
\newcommand{\N}{{\mathbb N}}
\newcommand{\C}{{\mathbb C}}
\newcommand{\abs}[1]{{\left\lvert{#1}\right\rvert}}
\newcommand{\norm}[1]{{\left\lVert{#1}\right\rVert}}

\title{Unique determination of the shape of a scattering screen from a
  passive measurement}

\author[1,2]{Emilia Bl{\aa}sten}
\author[1]{Lassi P\"aiv\"arinta}
\author[1]{Sadia Sadique}

\affil[1]{{\small Division of Mathematics, Tallinn University of
    Technology, Department of Cybernetics, 19086 Tallinn, Estonia}}

\affil[2]{{\small Department of Mathematics and Statistics, University
    of Helsinki, 00014 Helsinki, Finland}}

\begin{document}
\maketitle

\abstract{We consider the problem of fixed frequency acoustic
  scattering from a sound-soft flat screen. More precisely the
  obstacle is restricted to a two-dimensional plane and interacting
  with a arbitrary incident wave, it scatters acoustic waves to
  three-dimensional space. The model is particularly relevant in the
  study and design of reflecting sonars and antennas, cases where one
  cannot assume that the incident wave is a plane wave. Our main
  result is that given the plane where the screen is located, the
  far-field pattern produced by any single arbitrary incident wave
  determines the exact shape of the screen, as long as it is not
  antisymmetric with respect to the plane. This holds even for screens
  whose shape is an arbitrary simply connected smooth domain. This is
  in contrast to earlier work where the incident wave had to be a
  plane wave, or more recent work where only polygonal scatterers are
  determined.\\

  \medskip\flushleft\textbf{keywords}: inverse scattering; screen;
  uniqueness; single measurement; passive measurement\\
  
  \textbf{MSC}: 35R30, 35P25, 35A02
}

\section{Introduction}

\subsection{Antennas}

The motivation for the study of wave scattering from thin and large
objects lies in the antenna theory. The starting point for this was
when the Prussian Academy announced an open competition about who
could be the first to show the existence or non-existence of
electromagnetic (EM) waves in 1879. The existence of these waves were
predicted fifteen years earlier by the mathematical theory of James
Clerk Maxwell \cite{Maxwell}. The competition was won in 1882 by young
Heinrich Hertz, in favour of Maxwell's theory. He did this by
constructing a dipole antenna radiating EM waves which he could
measure. It is needless to mention the importance which this
experiment together with Maxwell's theory has had for modern
society. Hertz’s antenna consisted of two identical perfectly
conducting planar bodies, in his case squares, which create radiating
EM waves. Since, by reciprocity, radiating antennas are identical to
receiving antennas, the theory of antennas is closely connected to EM
scattering and inverse scattering theory.

A key question in antenna design for scientific radio arrays is how to
choose the antenna topology so that its impedance and radiation
pattern are frequency independent (FI) over a wide range of
frequencies and, simultaneously, the radiation pattern supports
beamforming. Well-known examples of FI antennas include log-periodic,
log-spiral, and UHF fractal antennas on high-frequencies. While proven
good for extremely wide band work, these are heavy and complicated
structures and thus not cost-efficient for extremely large arrays.

Instead of relying on traditional antenna forms, we aim to derive
general principles for designing antennas with frequency independent
characteristics. A major step in such a design strategy is to solve
the inverse scattering problem: given an input--output pair of waves,
which antenna shape produces it? The input is a given incident wave,
and the output is the far-field pattern produced by the antenna. The
path to antenna design is a long one, so in this paper we study the
technically easier accoustic scattering problem.

In acoustics, scattering surfaces or screens are not called antennas
but sonars. Traditionally sonars are classified into active and
passive sonars, depending of whether they act as a sound source or
receiver. We consider acoustic scattering from screens, something
which lies between these two extremes. It is more correct to call
these screens passive sonars as they do not have an energy source,
however they are active in the sense that their effect on the sound
pattern is significant. In general the nomenclature ``sonar'' refers
to probing using an active and passive sonar. Our research is rather
in the domain of acoustic design. The mathematical question of finding
a screen that scatters a given incident wave into a particular
far-field has applications like the following, for example: how to
reduce echo in an office space? How to direct acoustic vibrations or
reduce them?  Of course, it also answers the probing question: can we
determine the shape and location of a passive sonar by how it reflects
sound? These are complex questions, only one part of which we are
going to solve, namely that a single input--output pair of sound waves
uniquely determines the shape of a flat acoustic screen.

\subsection{Mathematical background}

The problem of inverse scattering with reduced measurement data has
gained a lot of interest lately. Traditionally determining a scatterer
from far-field measurements requires sending all possible incident
waves and recording the corresponding far-field patterns. The method
of using \emph{complex geometrical optics solutions} and infinitely
many far-field measurements in the fixed frequency setting was
pioneered by Sylvester and Uhlmann in \cite{SU}, and was the first
method for uniquely determining an arbitrary smooth enough scattering
potential by far-field measurements. The field has grown extremely
fast since then, almost to the point of saturation, and we will only
point the reader towards the surveys in \cite{Uhl} for references up
to 2003, which gives a good picture of the situation except for
scattering in two dimensions, which was solved by Bukhgeim
\cite{Bukhgeim} in 2007 and improved by sevaral authors,
e.g. \cite{GT,IUY,DSFKS,BIY,BTW}.

In many applications the scatterer is impenetrable, or we are only
interested in its shape or location. The shape determination problem
is known as Schiffer's problem in the literature \cite{CK}.
M.~Schiffer showed that a sound-soft obstacle (with non-empty
interior) can be uniquely determined by infinitely many far-field
patterns. The proof appeared as a private communication in the
monograph by Lax and Phillips \cite{LP}. Linear sampling
\cite{CKsampling} and factorization \cite{KG} methods were developped
and they are very well suited for shape determination, also from the
numerical point of view. These were applied in the context of curved
screens in acoustic \cite{AH} and electromagnetic \cite{CCD}
scattering to determine the shape and location of the screen, also
numerically. However these methods require the full use of infinitely
many far-field patterns, except for a case of interest in \cite{AH} to
which we will return later on in more detail.

There was still much to improve: counting dimensions shows that a
single far-field (a mapping $\mathbb S^{n-1}\to\C$) should be enough
to determine the shape (a manifold of dimension $n-1$). Colton and
Sleeman reduced the requirements to finitely many far-field patterns
\cite{CS}. It is widely conjectured that the uniqueness for Schiffer's
problem follows from a single far-field pattern \cite{CK,Isa2}, and
the situation for a general shape is wide open. This brings in the
current results. Various authors proved at roughly the same time in
the recent past that polyhedral sound-soft obstacles are uniquely
determined by a single far-field pattern in various settings
\cite{AR,CY,EY2,LPRX,LRX,Liu-Zou,Ron2}. Part of the results above
apply for screens as long as the screen is polygonal. A special case
in \cite{AH} gives the unique determination of a flat screen by a
single incident plane-wave measurement. Their proof requires that the
incident wave has non-vanishing properties everywhere on the plane
where the screen is located --- an issue that we remedy completely. So
far there is no proof for the unique determination of an obstacle's
shape by one far-field pattern without restrictive a priori
assumptions. The results in \cite{HNS} come very close: the obstacle
can be any Lipschitz domain as long as its boundary is not an analytic
manifold. It does not allow screens, which is our focus.

An alternative approach to unique determination which has gained
interest recently, is to consider what can be determined with less
data, e.g. one measurement, in the setting of penetrable scatterers
which were usually treated with various methods based on the
Sylvester--Uhlmann \cite{SU} or Bukhgeim \cite{Bukhgeim} papers. Much
of the recent work taking this point of view uses unique continuation
results and precise analysis on the behaviour of Fourier transforms of
the characteristic functions of various shapes
\cite{BPS,PSV,Bsource,BL2016,BL2017,HSV,Ikehata}. A very interesting
point of view is determining the so-called convex scattering support
\cite{KS1,KS2} by one far-field measurement. Again, none of the above
are applicable to screens per se.

\smallskip
Our work in this paper shows that given the far-field caused by any
single given incident wave scattering off of a smooth flat screen, the
latter's shape is determined uniquely. Our methods are based on ideas
which are partly motivated by the study of certain integral operators
in \cite{PR0,PR}. As in \cite{AH}, we first show that the far-field is
the restriction to a ball of radius $k$ (the wavenumber) of the
two-dimensional Fourier transform of a function supported on the
screen. Next, since the incident wave might vanish on part of the
screen, we show that the shape of the screen is exactly the support of
that function. This latter part involves a delicate analysis of the
Taylor coefficients of the scattered wave at the screen, but it leads
to our main theorem: that Schiffer's problem is uniquely solvable for
flat screens on a plane in three dimensions, for any incident wave
that causes scattering.

Let us discuss the significance of our result, with focus especially
on our improvements over \cite{AH}: that any incident field is
allowed. We will start with the mathematical challenges. Unlike for
infinite measurements inverse problems such as \cite{SU,Bukhgeim},
properties of the incident wave affect greatly the solvability of
single measurement inverse problems. Complex plane waves make things
technically simpler in many scattering problems because of their
explicit form and non-vanishing everywhere. This often reduces the
non-linear inverse scattering problem to the linear inverse source
problem after a suitable interpretation, or avoids other challenges,
as can be seen by comparing \cite{BPS,PSV,HSV} to
\cite{Bsource,BL2016,BL2017}. Futhermore, in situations involving
scattering from multiple objects, the total incident field impinging
on a given component is the sum of the original incident field and the
fields scattered by the other components. This is relevant when one
wishes to uniquely determine a screen where space contains other
scatterers that are known. On the other hand, from the applied point
of view, solving the inverse problems for any given incident field
enables \emph{passive measurements}. This means that even if we do not
have control over the incident wave, or cannot afford to control it,
the shape of the scatterer can be uniquely determined. This is both
good and bad. It means that the flat screen design problem of finding
its shape such that it scatters one given incident wave into a given
far-field has no more than a unique solution. On the other hand it
shows the impossibility of more complex input--output systems. One
cannot require it to scatter two or more incident waves into their
corresponding far-fields in general. The first incident-wave and
far-field pair already determines the shape.

Lastly, we remark that inverse scattering for screens has still many
open problems. Current solutions require that the screen have at least
a differentiable boundary, something which arises from the way that
the direct scattering problem has been shown solvable in
\cite{Stephan} and other sources. To bring forward the range
characterization condition from \cite{AH} to the situations of let's
say Herglotz incident waves, one would need to solve a deconvolution
problem. A more difficult and certainly more interesting question
mathematically and from the point of view of applications, is the
unique determination of the shape of a curved screen from one
measurement, passive or fully controlled. The problem is solved for
infinitely many measurements in \cite{AH}, but counting dimensions
suggests that it should be solvable with one measurement.

\subsection{Definitions and Theorems}
Let us go forward to the mathematics. We start by defining what we
mean by a screen and the scattering problem from screens. Then we
state our three main theorems. They give representation formulas for
the scattered wave, the far-field pattern, and the unique solvability
of Schiffer's problem for determining the shape of a scattering screen
using a single incident wave. In Section~\ref{sect:repThms} we prove
the representation formulas, and then in Section~\ref{sect:inverse} we
solve the inverse problem.

\medskip
We consider the scattering of a two dimensional sound-soft and flat
obstacle $\Omega$ in three dimensional space. We will assume that
$\Omega$ is an open subset of $\R^2\times\{0\}$.

\begin{definition}
  We call a set $\Omega\subset\R^3$ \emph{a screen}, if $\Omega =
  \Omega_0 \times \{0\}$ for some simply connected bounded domain
  $\Omega_0\subset\R^2$ whose boundary is smooth, and which we call
  its \emph{shape}.
\end{definition}

The scattering of acoustic waves by $\Omega$ leads to the study of the
Helmholtz equation $(\Delta +k^2)u=0$ where the wave number $k$ is
given by the positive constant $k=\omega/c$ where $c$ is the constant
speed of sound in the background fluid (air, water, etc) and $\omega$
is the angular frequency of the wave. The pressure of the total wave
vanishes on the boundary of a sound-soft obstacle, and the total wave
is a sum of the incident and scattered waves. This leads to the
following set of partial differential equations.

\begin{definition} \label{directScat}
  We define the \emph{direct scattering problem for a screen $\Omega$}
  as follows. Given an incident wave $u_i$ satisfying
  $(\Delta+k^2)u_i=0$ in $\R^3$ and a screen $\Omega$, the direct
  scattering problem has a solution if there is $u_s\in
  H^1_{loc}(\R^3\setminus\overline\Omega)$ that satisfies the
  following conditions
  \begin{align}
    &(\Delta +k^2) u_s = 0, & \R^3\setminus\overline\Omega,\label{helmholtz}\\
    &u_i(x) + u_s(x) = 0, &x\in\Omega,\label{dirichletCondition}\\
    &r \Big(\frac{\partial}{\partial r} - ik\Big)u_s = 0, &r\to\infty, \label{sommerfeld}
  \end{align}
  where $r=\abs{x}$ and the limit is uniform over all directions $\hat
  x = x/r \in \mathbb S^2$ as $r\to\infty$.
\end{definition}
There are a few things above that we should clarify. By
$H^1_{loc}(\R^3\setminus\overline\Omega)$ we mean the set of
distributions $\psi$ on $\R^3\setminus\overline\Omega$ for which
$\psi_{|U} \in H^1(U)$ for any bounded convex open set
$U\subset\R^3\setminus\overline\Omega$. Secondly, since strictly
speaking $u_s$ is not defined on $\Omega$, by
\eqref{dirichletCondition} we mean that the Sovolev trace of $u_s$
both from above ($x_3>0$) and below ($x_3<0$) coincides, and is equal
to $-u_i$ on $\Omega$.

We shall start by showing a representation formula \eqref{usRho} for
solutions $u_s$ of the direct scattering problem for the screen. This
is mainly done so that the reader would get a better intuition about
this type of problems and to fix notation and function spaces clearly.
This formula is well known, and it gives a unique solution to the
direct problem \cite{Stephan}. After that we will show that the
far-field, defined below, corresponding to a single given non-trivial
incident wave uniquely determines the screen $\Omega$. This type of
theorem was shown in \cite{AH} on the condition that the incident wave
does not vanish on the plane $\R^2\times\{0\}$. To get rid of this
assumption, we have to show Lemma~\ref{OmegaDetermination}. We remark
that the far-field pattern exists and is unique for each $u_s$
satisfying the following assumptions. See \cite{CK} for reference.
\begin{definition}
  Let $u_s$ satisfy the Sommerfeld radiation condition of
  \eqref{sommerfeld} and the Helmholtz equation $(\Delta+k^2)u_s=0$
  outside a ball $B\subset\R^3$. We say that $u_s^\infty$ is the
  far-field of $u_s$ if
  \[
  u_s(x) = \frac{e^{ik\abs{x}}}{\abs{x}} \left( u_s^\infty(\hat x) +
  \mathcal O\left(\frac{1}{\abs{x}}\right) \right)
  \]
  uniformly over $\hat x$ as $x\to\infty$.
\end{definition}

We define some notation which will be useful throughout the whole
text.
\begin{itemize}
\item $x,y,\ldots$ represent variables in $\R^3$, and we associate to
  them various projections described below.
\item $x',y',\ldots$ mean variables in $\R^2$ or projections to
  $\R^2$. For example if $x = (1,2,3)\in\R^3$ then in that context
  $x'=(1,2)\in\R^2$, but we could have $dy'$ in an integral over a
  subset of $\R^2$ without having to define the variable $y$
  separately.
\item $x^0,y^0,\ldots$ denote lifts to $\R^3$, meaning $x^0 =
  (x',0)$. For example if $x' = (-1,-2)$ then $x^0=(-1,-2,0)$. This
  notation can also be used as a projection
  $\R^3\to\R^2\times\{0\}$. So if $x=(1,2,3)$ then $x^0 =
  (1,2,0)$. Essentially ${x'}^0 = (x')^0 = x^0$ and ${x^0}' = (x^0)' =
  x'$ but we do not use this combined notation explicitly.
\item $\Phi$ is reserved for the fundamental solution to
  $(\Delta+k^2)$, defined in Lemma~\ref{fundamentalSolution}.
\item $u^+,u^-$ mean the function $u$ restricted to $\R^2\times\R_+$
  and $\R^2\times\R_-$, respectively. If their variable is in
  $\R^2\times\{0\}$ then they are the two-sided limits (traces) as
  $x_3\to0$. We often use $\partial_3 u^+$ and $\partial_3 u^-$. These
  are simply the derivatives in the $x_3$-direction of $u^+$ and
  $u^-$, respectively. Often this is evaluated on $\R^2\times\{0\}$
  where it then denotes the one-sided derivative, i.e. the trace of
  $\partial_3 u^\pm$.
\item $\widetilde H^{-1/2}(\Omega_0)$: this is the set of
  $H^{-1/2}(\R^2)$ distributions whose support is contained in
  $\overline{\Omega_0}$, where we recall that $\Omega_0$ signifies the
  shape of a screen $\Omega$.
\end{itemize}

\bigskip
Let us discuss the direct scattering problem
\eqref{helmholtz}--\eqref{sommerfeld} first. In
Section~\ref{sect:repThms}, Proposition~\ref{prop:usRep}, we will show
the well-known representation formula
\begin{equation} \label{usRho}
  u_s(x) = \int_{\R^2} \Phi(x,y^0) \rho(y') dy'
\end{equation}
for all $x\in\R^3\setminus\overline\Omega$, where
\begin{equation} \label{rhoDef}
  \rho(y') = \partial_3 u_s^+(y^0) - \partial_3 u_s^-(y^0)
\end{equation}
is an element of $\widetilde H^{-1/2}(\Omega_0)$ and the integral in
\eqref{usRho} is interpreted as a distribution pairing between $\rho$
and the smooth test function $\Phi$ restricted to the screen. Taking
the trace $x\to\Omega$ in \eqref{usRho} and recalling that $u_s = -
u_i$ on $\Omega$ in the sense of traces, \eqref{dirichletCondition},
we get
\begin{equation} \label{intEq}
  u_i(x) = - \int_{\R^2} \Phi(x,y^0) \rho(y') dy'.
\end{equation}
Now, for any candidate solution $u_s\in
H^1_{loc}(\R^3\setminus\overline\Omega)$, it solves the direct problem
\eqref{helmholtz}--\eqref{sommerfeld} if and only if $\rho$, as
defined above, is in $\widetilde H^{-1/2}(\Omega_0)$ and is the
solution to \eqref{intEq}. More precisely, given $\rho$ solving the
integral equation, we can define $u_s$ by \eqref{usRho}, and it would
solve the direct scattering problem. This was shown in Theorem~2.5 in
\cite{Stephan}. Theorem~2.7 in the same source proves that
\eqref{intEq} has a unique solution $\rho\in \widetilde
H^{-1/2}(\Omega_0)$ given any $u_i \in H^{1/2}(\Omega_0)$.

\medskip
Our main contributions are the following. The first of which is the
familiar far-field representation derived from \eqref{usRho} if $\rho$
is a function. We generalize is to distributions in
$H^{-1/2}(\R^2)$. This is required for consistency of the function
spaces involved. This detail has not been stated explicitely in
earlier work involving scattering from screens.
\begin{theorem} \label{usFarField}
  Let $\Omega\subset\R^3$ be a screen and $u_s$ satisfy the direct
  scattering problem for some incident field $u_i$ and screen
  $\Omega$. Then its far-field has the representation
  \begin{equation}
    u^\infty_s(\hat x) = \frac{1}{4\pi} \left\langle
    \big(\partial_3u_s^+ - \partial_3u^-\big)(y^0), e^{-ik\hat x\cdot
      y^0} \right\rangle_{y'}
  \end{equation}
  for $\hat x \in \mathbb S^2$. If $\partial_3 u_s^+ - \partial_3
  u_s^-$ is integrable on $\Omega$, this formula is equivalent to
  \[
  u^\infty_s(\hat x) = \frac{1}{4\pi} \int_{\R^2} e^{-ik\hat x\cdot
    y^0} \big(\partial_3u_s^+ - \partial_3u^-\big)(y^0) dy'.
  \]
\end{theorem}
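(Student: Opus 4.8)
The plan is to start from the representation formula \eqref{usRho}, namely $u_s(x) = \int_{\R^2} \Phi(x,y^0)\rho(y')\,dy'$ with $\rho = \partial_3 u_s^+ - \partial_3 u_s^-$, interpreted as a distribution pairing between $\rho \in \widetilde H^{-1/2}(\Omega_0)$ and the smooth test function $y' \mapsto \Phi(x,y^0)$. The key is then the known asymptotic expansion of the fundamental solution $\Phi(x,y^0) = \frac{e^{ik|x-y^0|}}{4\pi|x-y^0|}$ as $|x| \to \infty$. Writing $x = |x|\hat x$ and expanding $|x - y^0| = |x| - \hat x \cdot y^0 + \mathcal O(1/|x|)$ uniformly for $y^0$ in the compact set $\overline{\Omega}$, one obtains
\[
  \Phi(x,y^0) = \frac{e^{ik|x|}}{4\pi|x|}\Big( e^{-ik\hat x\cdot y^0} + \mathcal O\big(\tfrac{1}{|x|}\big)\Big),
\]
where the error term, together with all its $y'$-derivatives, is $\mathcal O(1/|x|)$ uniformly in $y^0 \in \overline{\Omega}$. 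Factoring $e^{ik|x|}/|x|$ out of the pairing and comparing with the definition of the far-field then yields the claimed formula with the constant $1/(4\pi)$.

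Concretely, the steps I would carry out are: first, recall from the lemma defining $\Phi$ its explicit form and the standard large-argument expansion of $|x - y^0|$, being careful to record that the remainder is uniform in $y^0$ over the compact screen and remains so after differentiation in $y'$ (this is what makes it legitimate to pair it against the $H^{-1/2}$ distribution $\rho$). Second, substitute this expansion into \eqref{usRho}, pull the scalar factor $e^{ik|x|}/|x|$ outside the distributional pairing (valid since it does not depend on $y'$), and obtain
\[
  u_s(x) = \frac{e^{ik|x|}}{|x|}\left( \frac{1}{4\pi}\big\langle \rho, e^{-ik\hat x\cdot y^0}\big\rangle_{y'} + \mathcal O\big(\tfrac{1}{|x|}\big)\right),
\]
uniformly in $\hat x$. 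Third, match this against the definition of $u_s^\infty$ to read off $u_s^\infty(\hat x) = \frac{1}{4\pi}\langle \rho, e^{-ik\hat x \cdot y^0}\rangle_{y'}$. Fourth, for the ``equivalent'' statement, observe that when $\rho$ happens to be integrable on $\Omega$ the distributional pairing is literally the Lebesgue integral $\int_{\R^2} e^{-ik\hat x\cdot y^0}\rho(y')\,dy'$, and $\rho(y') = (\partial_3 u_s^+ - \partial_3 u_s^-)(y^0)$ by \eqref{rhoDef}.

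The main obstacle — and the only point requiring genuine care rather than routine bookkeeping — is justifying that the remainder in the expansion of $\Phi(x,\cdot)$ is controlled in the topology of $H^{1/2}(\R^2)$ (equivalently, that its restriction to a neighbourhood of $\overline{\Omega_0}$ together with suitably many $y'$-derivatives is $\mathcal O(1/|x|)$), so that pairing it against $\rho \in \widetilde H^{-1/2}(\Omega_0)$ produces an error that is genuinely $\mathcal O(1/|x|)$ and uniform in $\hat x$. Since $\overline{\Omega_0}$ is compact and $x$ is bounded away from it, $y' \mapsto \Phi(x,y^0)$ is smooth there with all derivatives admitting such expansions, so this reduces to a quantitative but elementary estimate on derivatives of $y' \mapsto e^{ik|x-y^0|}/|x-y^0|$; I would state it as a short lemma or inline observation rather than belabour it. Everything else — the expansion of $|x-y^0|$, factoring out the scalar, matching the far-field definition, and the integrable special case — is standard and mirrors the classical computation in \cite{CK}, adapted only to keep the pairing distributional.
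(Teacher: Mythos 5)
Your proposal is correct and follows essentially the same route as the paper: both start from the representation formula \eqref{usRho}, expand $\Phi(x,y^0)$ asymptotically as $|x|\to\infty$, and justify passing the limit through the distributional pairing by controlling the remainder and its first $y'$-derivatives uniformly on the compact screen (the paper isolates this as Lemma~\ref{C1convergence} and invokes the fact that $\widetilde H^{-1/2}(\Omega_0)$ elements act continuously on $C^1$ functions near their support). The integrable special case is handled identically.
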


Our main theorem shows that even with an unoptimal incident wave, the
scattering caused by it from flat screens determines the shape
uniquely.
\begin{theorem} \label{inverseSolution}
  Let $\Omega,\tilde\Omega\subset\R^3$ be screens and $k\in\R_+$. Let
  $u_i$ be an incident wave and $u_s, \tilde u_s$ be scattered waves
  that satisfy the direct scattering problem for screens
  $\Omega,\tilde\Omega$, respectively.

  If $u_i$ is not antisymmetric with respect to $\R^2\times\{0\}$ and
  $u_s^\infty = \tilde u_s^\infty$, then $\Omega = \tilde\Omega$. If
  it is antisymmetric then $u_s^\infty = \tilde u_s^\infty = 0$ for
  any screens $\Omega,\tilde\Omega$.
\end{theorem}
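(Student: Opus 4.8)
The plan is to split along the stated dichotomy and to reduce the inverse problem to a single statement about the support of the jump density $\rho$ of \eqref{rhoDef}. First, the antisymmetric case is essentially immediate: if $u_i(x',x_3)=-u_i(x',-x_3)$ then $u_i\equiv 0$ on $\R^2\times\{0\}$, so in particular $u_i|_\Omega=0$, and by the uniqueness of the solution of the integral equation \eqref{intEq} (Theorem~2.7 in \cite{Stephan}) the density $\rho$ vanishes; hence $u_s\equiv 0$ by \eqref{usRho} and $u_s^\infty\equiv 0$, and likewise for $\tilde u_s$, for any screens. Conversely, I would decompose $u_i$ into its parts even and odd in $x_3$, note that both solve $(\Delta+k^2)=0$ in $\R^3$, and apply Holmgren's uniqueness theorem to the even part (whose Cauchy data on the plane vanishes as soon as $u_i|_{\R^2\times\{0\}}=0$) to conclude that $u_i$ is antisymmetric if and only if it vanishes on the entire plane. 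Since $u_i$ is real-analytic in $\R^3$, its trace on $\R^2\times\{0\}$ is real-analytic on $\R^2$; so if $u_i$ is \emph{not} antisymmetric then $u_i|_{\Omega_0}\not\equiv 0$, and therefore $\rho\neq 0$ by \eqref{intEq}, and similarly $\tilde\rho\neq 0$.

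Next I would pass from equal far-fields to equal densities. By Theorem~\ref{usFarField}, $u_s^\infty(\hat x)=\frac{1}{4\pi}\big\langle\rho(y'),e^{-ik\hat x\cdot y^0}\big\rangle_{y'}$, and since $\hat x\cdot y^0=\hat x'\cdot y'$ while $\hat x'$ ranges over the closed unit disk of $\R^2$ as $\hat x$ ranges over $\mathbb S^2$, this says precisely that $u_s^\infty$ is, up to the constant $1/4\pi$, the Fourier transform of $\rho$ restricted to the disk $\{\xi\in\R^2:\abs{\xi}\le k\}$; the same holds for $\tilde u_s^\infty$ and $\tilde\rho$. Thus $u_s^\infty=\tilde u_s^\infty$ forces $\widehat\rho=\widehat{\tilde\rho}$ on a nonempty open subset of $\R^2$. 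Because $\rho,\tilde\rho$ are compactly supported distributions (elements of $\widetilde H^{-1/2}(\Omega_0)$ and $\widetilde H^{-1/2}(\tilde\Omega_0)$), their Fourier transforms are restrictions of entire functions, hence agree on all of $\R^2$, so $\rho=\tilde\rho$. Denote this common distribution by $\rho$; at this point $\operatorname{supp}\rho\subseteq\overline{\Omega_0}\cap\overline{\tilde\Omega_0}$.

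The crux is Lemma~\ref{OmegaDetermination}, namely that $\operatorname{supp}\rho=\overline{\Omega_0}$ whenever $u_i$ is not antisymmetric. Granting this for both screens gives $\overline{\Omega_0}=\operatorname{supp}\rho=\overline{\tilde\Omega_0}$, and since a bounded domain with smooth boundary equals the interior of its closure, we get $\Omega_0=\tilde\Omega_0$, i.e.\ $\Omega=\tilde\Omega$. To prove the lemma I would let $N\subseteq\Omega_0$ be the open set of points near which $\rho$ vanishes and argue that $N$ is both open and closed in $\Omega_0$. Openness is by definition; the content is closedness. If $p\in N$, then across $p^0$ the two-sided traces of $u_s$ coincide (both equal $-u_i$) and the two-sided normal derivatives coincide (their difference is $\rho$, which vanishes near $p$), so $u_s\in H^1$ in a full neighborhood of $p^0$ and, by the jump formula for the distributional Laplacian, solves $(\Delta+k^2)u_s=0$ there; by analytic hypoellipticity $u_s$ extends real-analytically across $N\times\{0\}$. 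For a point $p\in\overline N\cap\Omega_0$ one then has, on the $N$-side, this real-analytic extension of $u_s$, and along the whole screen near $p$ one has the Cauchy data $(-u_i,\partial_3u_s^{+})$ of $u_s$; comparing these via the Cauchy--Kovalevskaya/Holmgren machinery applied to the Taylor coefficients of $u_s$ at the screen should force $u_s$ to extend analytically across a neighborhood of $p$ as well, giving $\rho=0$ near $p$ and $p\in N$. Hence $N$ is empty or all of $\Omega_0$; the latter would make $\rho$ supported on the smooth curve $\partial\Omega_0$, which is impossible for a nonzero element of $H^{-1/2}(\R^2)$, while $\rho\neq 0$ was established above. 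Therefore $N=\emptyset$ and $\operatorname{supp}\rho=\overline{\Omega_0}$.

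I expect the main obstacle to be exactly this last propagation step — showing that the locus $N$ where $\rho$ vanishes is relatively closed in $\Omega_0$. The difficulty is that near a boundary point $p$ of $N$ the screen is only a two-dimensional obstacle, not the boundary of a region, so one cannot invoke a single well-posed boundary value problem; instead one must track the analytic continuation of $u_s$ together with the matching of its Taylor expansion against the real-analytic data $-u_i$ on the screen, and carefully handle the interface where $\rho$ may fail to vanish. Everything else — the antisymmetric reduction, the Fourier/Paley--Wiener argument, and the final extraction of $\Omega=\tilde\Omega$ from equality of supports — is routine once this lemma is in hand.
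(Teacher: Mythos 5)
Your reductions are all sound and match the paper's: the antisymmetric case via uniqueness of the integral equation \eqref{intEq}; the passage from $u_s^\infty=\tilde u_s^\infty$ to $\rho=\tilde\rho$ via the restricted Fourier transform and analyticity of Fourier transforms of compactly supported distributions (this is exactly Lemma~\ref{rhoDetermination}); and the final step $\overline{\Omega_0}=\overline{\tilde\Omega_0}\Rightarrow\Omega_0=\tilde\Omega_0$ for smooth domains. The gap is exactly where you suspect it: the closedness of $N$ in your open--closed argument for Lemma~\ref{OmegaDetermination} is not proved, and the mechanism you gesture at does not obviously close it. At a point $p\in\overline N\cap\Omega_0$ you know the Dirichlet trace $u_s^\pm=-u_i$ on the screen near $p$, but the Neumann traces $\partial_3 u_s^\pm$ there are unknown $H^{-1/2}$ distributions (only their difference $\rho$ is controlled, and only on the $N$-side), so they need not be analytic or even continuous. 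Holmgren/Cauchy--Kovalevskaya requires full Cauchy data, or an analytic extension from a full neighbourhood; what you have at $p$ is one-sided analyticity on the $N$-side plus only half of the Cauchy data on the other side. This is precisely the ``unique continuation across an interface with partially unknown data'' obstruction that makes Schiffer's problem hard in general, so this step should not be expected to be routine.

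The paper's proof of Lemma~\ref{OmegaDetermination} avoids the propagation entirely and derives a global contradiction from a single ball $B(x_0',r)\subset\Omega_0\setminus\operatorname{supp}\rho$: the representation \eqref{usRho} makes $u_s$ even in $x_3$, so on the base of the tube $B(x_0',r)\times\R$ one has $u_s=-u_i$ and $\partial_3 u_s=0$; iterating $\partial_3^{j+2}u_s=-(\Delta'+k^2)\partial_3^j u_s$ (from the Helmholtz equation in the tube) pins down every Taylor coefficient of $u_s$ at $(x_0',0)$ and shows they coincide with those of $-\tilde u_i$, where $\tilde u_i$ is the even part of $u_i$. Real-analyticity then gives $u_s=-\tilde u_i$ on the connected set $\big(B(x_0',r)\times\R\big)\cup\big(\R^3\setminus\overline B(0,R)\big)$, so $\tilde u_i$ is an entire solution of the Helmholtz equation satisfying the Sommerfeld radiation condition, hence identically zero, i.e.\ $u_i$ is antisymmetric --- a contradiction. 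Note that this needs neither connectedness of $\Omega_0\setminus\operatorname{supp}\rho$ nor the borderline fact about $H^{-1/2}$ distributions supported on curves. If you replace your open--closed argument by this single-ball contradiction, the rest of your write-up can stand as is.
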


\section{Representation theorems} \label{sect:repThms}

In this section we will prove that solutions to the direct scattering
problem satisfy \eqref{usRho}. In essence we present the well-known
but very condensed argument of \cite{Stephan} in more detail for the
convenience of the readers. We will start with representation formulas
for smooth functions and then approximate the $H^1$-smooth $u_s$. At
the end of the section we will prove Theorem~\ref{usFarField}.

\begin{lemma} \label{C2green}
  Let $D\subset\R^3$ be a bounded domain whose boundary is piecewise
  of class $C^{1}$ and let $\nu$ denote the unit normal vector to the
  boundary $\partial D$ directed to the exterior of $D$. Then, for $u,
  v \in C^2(\overline{D})$ we have Green's second formula
  \begin{equation} \label{eq 5}
    \int_D (v \Delta u - u \Delta v) dx = \int_{\partial D} \Big(
    \frac{\partial u}{\partial \nu } v - u \frac{\partial v}{\partial
      \nu } \Big) ds
  \end{equation}
  where $ds$ is the surface measure of $\partial D$.
\end{lemma}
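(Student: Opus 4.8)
The plan is to recognize \eqref{eq 5} as the Gauss--Ostrogradsky divergence theorem applied to one well-chosen vector field. Take $F := v\,\nabla u - u\,\nabla v$, which has components in $C^1(\overline{D})$ because $u,v\in C^2(\overline{D})$, so the divergence theorem is applicable on $D$ --- its boundary being piecewise $C^1$, hence Lipschitz, with an a.e.-defined outward unit normal $\nu$. A one-line computation gives
\[
  \operatorname{div} F = \big(\nabla v\cdot\nabla u + v\,\Delta u\big) - \big(\nabla u\cdot\nabla v + u\,\Delta v\big) = v\,\Delta u - u\,\Delta v ,
\]
the two $\nabla u\cdot\nabla v$ terms cancelling, while on $\partial D$
\[
  F\cdot\nu = v\,(\nabla u\cdot\nu) - u\,(\nabla v\cdot\nu) = \frac{\partial u}{\partial\nu}\,v - u\,\frac{\partial v}{\partial\nu} .
\]
Substituting both into $\int_D \operatorname{div}F\,dx = \int_{\partial D} F\cdot\nu\,ds$ gives \eqref{eq 5} verbatim.

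The only non-algebraic ingredient is the divergence theorem on a domain whose boundary is merely piecewise $C^1$, which is classical: such $D$ is a bounded Lipschitz domain, and the divergence theorem holds there for $C^1$ (indeed $W^{1,1}$) vector fields; concretely one may cover $\partial D$ by finitely many charts in which it is a $C^1$ graph and sum the elementary one-variable integrations, the edges between the pieces forming a null set for the surface measure. An equivalent organization first establishes Green's first identity $\int_D\big(v\,\Delta u + \nabla u\cdot\nabla v\big)\,dx = \int_{\partial D} v\,\frac{\partial u}{\partial\nu}\,ds$ (the divergence theorem applied to $v\,\nabla u$) and then subtracts the analogous identity with the roles of $u$ and $v$ exchanged. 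I foresee no genuine obstacle: granted the divergence theorem, the lemma is pure bookkeeping (see, e.g., \cite{CK}).
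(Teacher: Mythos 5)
Your proof is correct and is the standard argument: apply the divergence theorem to $F = v\,\nabla u - u\,\nabla v$ (equivalently, subtract the two Green's first identities). The paper itself does not reprove the lemma but simply cites Theorem~3 in Appendix~C.2 of \cite{evans}, whose proof is exactly this divergence-theorem computation, so your argument matches the intended one.
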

\begin{proof}
  Theorem~3 in Appendix~C.2 of \cite{evans}.
\end{proof}

\begin{lemma} \label{fundamentalSolution}
  Let $D\subset\R^3$ be a bounded domain whose boundary is piecewise
  of class $C^1$ and $k\in\R_+$. Let
  \[
  \Phi(x,y) = \frac{e^{ik\abs{x-y}}}{4\pi\abs{x-y}}
  \]
  for $x,y\in\R^3$, $x\neq y$. Then for any $\varphi\in C^2(\overline
  D)$ and $x\in\R^3\setminus\partial D$ we have
  \begin{align}
    \int_D \Phi(x,y) (\Delta+k^2)\varphi(y) dy &= \int_{\partial D}
    \big( \Phi(x,y) \partial_\nu \varphi(y) - \varphi(y) \partial_\nu
    \Phi(x,y)\big) ds(y) \notag\\ & \quad+
    \begin{cases} 0,
      &x\in\R^3\setminus\overline{D},\\ -\varphi(x), &x\in D.
    \end{cases}
    \label{greenRepresentation}
  \end{align}
\end{lemma}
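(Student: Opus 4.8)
The plan is to reduce everything to Green's second formula (Lemma~\ref{C2green}) applied to the pair $u = \varphi$ and $v = \Phi(x,\cdot)$, treating separately the two cases in which the singularity of $\Phi(x,\cdot)$ at $y=x$ lies outside or inside $D$. The one computational input I would establish first is that $\Phi(x,\cdot)$ solves the homogeneous Helmholtz equation off the diagonal, i.e. $(\Delta_y + k^2)\Phi(x,y) = 0$ for all $y \neq x$. This is a direct computation: writing $r = \abs{x-y}$, the function $e^{ikr}/(4\pi r)$ is radial in $r$ and satisfies the radial form of the Helmholtz equation in $\R^3$. With this in hand, the volume term $\int_D \varphi\,\Delta_y\Phi\,dy$ can always be replaced by $-k^2\int_D \varphi\,\Phi\,dy$ wherever Green's formula is legitimate, which is exactly what converts the bare Laplacian identity into the stated $(\Delta+k^2)$ identity.

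For the exterior case $x \in \R^3 \setminus \overline{D}$, the singularity lies away from $\overline{D}$, so $\Phi(x,\cdot) \in C^2(\overline{D})$ and Lemma~\ref{C2green} applies verbatim. Substituting $\int_D \varphi\,\Delta_y\Phi\,dy = -k^2\int_D\varphi\,\Phi\,dy$ into the Laplacian identity and regrouping yields $\int_D \Phi\,(\Delta+k^2)\varphi\,dy = \int_{\partial D}\big(\Phi\,\partial_\nu\varphi - \varphi\,\partial_\nu\Phi\big)\,ds$, which is the claimed formula with the vanishing bracket.

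The interior case $x \in D$ is the one requiring care, since $\Phi(x,\cdot)$ is singular at $y=x \in D$ and Green's formula cannot be applied on $D$ directly. I would excise a small ball $B_\epsilon = B(x,\epsilon) \subset D$ and apply Lemma~\ref{C2green} on $D_\epsilon = D \setminus \overline{B_\epsilon}$, where $\Phi(x,\cdot)$ is smooth. The boundary of $D_\epsilon$ splits into $\partial D$, carrying the usual exterior normal, and $\partial B_\epsilon$, whose exterior normal with respect to $D_\epsilon$ points toward $x$. After rewriting with $(\Delta+k^2)$ as above, I would let $\epsilon \to 0$. The volume integral converges to $\int_D \Phi\,(\Delta+k^2)\varphi\,dy$ because the singularity $1/\abs{x-y}$ is integrable in $\R^3$ and $(\Delta+k^2)\varphi$ is bounded on $\overline{D}$, while the integral over $\partial D$ is independent of $\epsilon$.

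The crux is the limit of the $\partial B_\epsilon$ contribution, which is where the point value $-\varphi(x)$ emerges. Parametrizing $y = x + \epsilon\omega$ with $\omega \in \mathbb S^2$, on $\partial B_\epsilon$ one has $\Phi = e^{ik\epsilon}/(4\pi\epsilon)$ and, computing $\partial_\nu\Phi$ with the inward normal $\nu = -\omega$, a singular term of size $1/\epsilon^2$. The term $\int_{\partial B_\epsilon}\Phi\,\partial_\nu\varphi\,ds$ and the $ik/\epsilon$ part of $\int_{\partial B_\epsilon}\varphi\,\partial_\nu\Phi\,ds$ are both $O(\epsilon)$, since the surface area is $4\pi\epsilon^2$; the $1/\epsilon^2$ part contributes $-e^{ik\epsilon}$ times the spherical average of $\varphi$ over $\partial B_\epsilon$, which tends to $-\varphi(x)$ by continuity. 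I expect this $\epsilon \to 0$ analysis of the surface integral --- in particular the bookkeeping of the normal orientation so that the singular part gives exactly $-\varphi(x)$ rather than $+\varphi(x)$ --- to be the only genuinely delicate point; everything else is a rearrangement of Green's formula.
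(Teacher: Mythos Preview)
Your proposal is correct and follows essentially the same argument as the paper: excise a small ball around the singularity, apply Green's second formula on the punctured domain, use that $(\Delta_y+k^2)\Phi(x,y)=0$ for $y\neq x$, and extract the point value $-\varphi(x)$ from the limiting spherical boundary integral. The only cosmetic difference is that you handle the exterior case $x\in\R^3\setminus\overline D$ separately by applying Green directly, whereas the paper treats both cases in one pass via the excision argument (the inner sphere $S(x,r)\cap\overline D$ being empty for small $r$ when $x\notin\overline D$).
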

\begin{proof}
  We have $(\Delta+k^2)\varphi$ bounded and $y\mapsto \Phi(x,y)$
  integrable for any $x$, so
  \[
  \int_D \Phi(x,y) (\Delta+k^2)\varphi(y) dy = \lim_{r\to0}
  \int_{D\setminus B(x,r)} \Phi(x,y) (\Delta+k^2)\varphi(y) dy.
  \]
  Green's second formula \eqref{eq 5} applied to the integral on the
  right gives
  \begin{align*}
    \ldots &= \int_{D\setminus B(x,r)} (\Delta+k^2)\Phi(x,y)
    \varphi(y) dy \\ &\phantom{=}+ \int_{S(x,r)\cap\overline D} \big(
    \Phi(x,y) \partial_\nu \varphi(y) - \varphi(y) \partial_\nu
    \Phi(x,y)\big) ds(y) \\ &\phantom{=}+ \int_{\partial
      D\setminus\overline B(x,r)} \big( \Phi(x,y) \partial_\nu
    \varphi(y) - \varphi(y) \partial_\nu \Phi(x,y)\big) ds(y).
  \end{align*}
  The first integral here vanishes because $(\Delta_y+k^2)\Phi(x,y)=0$
  when $y\neq x$.

  The integral over $\partial D\setminus\overline B(x,r)$ gives the
  second term in the claim when $r\to0$ because $\Phi, \partial\Phi$
  are integrable since $x\notin\partial D$. Let us estimate the first
  term in the first boundary integral. We have
  \[
  \int_{S(x,r)\cap\overline D} \Phi(x,y) \partial_\nu \varphi(y) ds(y)
  = \int_{S(x,r)\cap\overline D} \frac{e^{ikr}}{4\pi r}
  \partial_\nu(y) ds(y)
  \]
  and by the ML-inequality we have
  \[
  \abs{ \int_{S(x,r)\cap\overline D} \Phi(x,y) \partial_\nu \varphi(y)
    ds(y) } \leq \frac{1}{4\pi r} \sup_{y\in S(x,r) \cap \overline D}
  \abs{\nabla \varphi(y)} 4\pi r^2 \to 0
  \]
  as $r\to0$ because $\abs{\nabla \varphi}$ has a uniform bound in
  $\overline D$. In the last integral we have $\partial_nu \Phi(x,y) =
  -\partial_r \big(e^{ikr}/(4\pi r)\big) = -ik e^{ikr} / (4\pi r) +
  e^{ikr}/(4\pi r^2)$. The integral involving $ik e^{ikr}/(4\pi r)$
  can be estimated as above to conclude that it vanishes when
  $r\to0$. The remaining integral is
  \begin{align*}
    &- \frac{e^{ikr}}{4\pi r^2} \int_{S(x,r) \cap\overline D}
    \varphi(y) ds(y) \\&\qquad = - \frac{e^{ikr}}{4\pi r^2}
    \int_{S(x,r) \cap\overline D} \big(\varphi(y) - \varphi(x)\big)
    ds(y) - \frac{e^{ikr}}{4\pi r^2} \varphi(x)
    s\big(S(x,r)\cap\overline D\big).
  \end{align*}
  We have $\abs{\varphi(y)-\varphi(x)} \leq \sup_{\xi\in\overline D}
  \abs{\nabla \varphi(\xi)} \abs{x-y}$ so the absolute value of the
  first integral above can be estimated as
  \[
  \ldots \leq \frac{\sup \abs{\nabla \varphi}}{4\pi r^2}
  \int_{S(x,r)\cap\overline D} \abs{x-y} dy =
  \frac{\sup\abs{\nabla\varphi}}{4\pi r^2} r s\big(S(x,r)\cap\overline
  D\big) \to 0
  \]
  as $r\to0$. The form of the remaining term implies the claim in each
  of the cases $x\in D$, $x\in \R^3\setminus\overline D$.
\end{proof}

\begin{lemma} \label{H1integrationByParts}
  Let $D\subset\R^3$ be a bounded domain with smooth boundary and
  $k\in\R_+$. Let $u_s \in H^1(D)$ with $(\Delta+k^2)u_s \in
  L^2(D)$. Then
  \begin{align}
    u_s(x) &= -\int_D \Phi(x,y)(\Delta+k^2)u_s(y) dy \notag \\ &\quad
    + \int_{\partial D} \big( \Phi(x,y)\partial_\nu u_s(y) - u_s(y)
    \partial_\nu\Phi(x,y) \big) ds(y) \label{usDistributionInside}
  \end{align}
  for $x\in D$ in the distribution sense. For
  $x\in\R^3\setminus\overline D$ we have
  \begin{align}
    0 &= -\int_D \Phi(x,y)(\Delta+k^2)u_s(y) dy \notag \\ &\qquad +
    \int_{\partial D} \big( \Phi(x,y)\partial_\nu u_s(y) - u_s(y)
    \partial_\nu\Phi(x,y) \big) ds(y) \label{usDistributionOutside}
  \end{align}
  in the distribution sense. Here the boundary integrals involving
  $\partial_\nu u_s$ are to be interpreted as distribution pairings
  between a $H^{-1/2}(\partial D)$ function and a test function.
\end{lemma}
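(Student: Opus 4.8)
The plan is to reduce to the smooth case already established in Lemma~\ref{fundamentalSolution} by a density argument in the natural energy space, and then pass to the limit. The right setting is $V(D) := \{v \in H^1(D) : \Delta v \in L^2(D)\}$ (equivalently $(\Delta+k^2)v\in L^2(D)$, since $v\in L^2(D)$) equipped with the graph norm $\|v\|_{H^1(D)} + \|\Delta v\|_{L^2(D)}$. I would first invoke two classical facts about this space for a domain with smooth boundary: (i) $C^\infty(\overline D)$ is dense in $V(D)$; and (ii) the classical normal derivative $v\mapsto\partial_\nu v|_{\partial D}$ extends from $C^\infty(\overline D)$ to a bounded linear operator $V(D)\to H^{-1/2}(\partial D)$, characterised by the weak Green identity
\[
\langle \partial_\nu v,\, \gamma_0 w\rangle_{\partial D} = \int_D \big(\nabla v\cdot\nabla w + w\,\Delta v\big)\,dx, \qquad w\in H^1(D),
\]
where $\gamma_0:H^1(D)\to H^{1/2}(\partial D)$ is the Dirichlet trace. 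Fact (ii) is precisely the interpretation of the boundary integrals $\int_{\partial D}\Phi(x,y)\partial_\nu u_s(y)\,ds(y)$ announced in the statement: a pairing of the $H^{-1/2}(\partial D)$ distribution $\partial_\nu u_s$ with the trace on $\partial D$ of $\Phi(x,\cdot)$, which is smooth for $x\notin\partial D$.

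Next I would take $\varphi_n\in C^\infty(\overline D)\subset C^2(\overline D)$ with $\varphi_n\to u_s$ in $V(D)$, so that $(\Delta+k^2)\varphi_n\to(\Delta+k^2)u_s$ in $L^2(D)$, $\gamma_0\varphi_n\to\gamma_0 u_s$ in $H^{1/2}(\partial D)$, and $\partial_\nu\varphi_n\to\partial_\nu u_s$ in $H^{-1/2}(\partial D)$. Applying Lemma~\ref{fundamentalSolution} to each $\varphi_n$ gives \eqref{greenRepresentation} with $\varphi_n$ in place of $u_s$, and I would pass to the limit in it term by term with $x$ fixed off $\partial D$. The volume term converges because $y\mapsto\Phi(x,y)\in L^2(D)$ for every $x$ (its singularity $|x-y|^{-1}$ is square-integrable in $\R^3$), so $\int_D\Phi(x,y)f_n(y)\,dy\to\int_D\Phi(x,y)f(y)\,dy$ whenever $f_n\to f$ in $L^2(D)$. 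The single-layer term $\langle\partial_\nu\varphi_n,\Phi(x,\cdot)\rangle_{\partial D}$ converges since $\Phi(x,\cdot)|_{\partial D}\in H^{1/2}(\partial D)$ for $x\notin\partial D$, and its limit is what we mean by $\int_{\partial D}\Phi(x,y)\partial_\nu u_s(y)\,ds(y)$. The double-layer term converges likewise, $\partial_\nu\Phi(x,\cdot)|_{\partial D}$ being smooth and paired against $\gamma_0\varphi_n\to\gamma_0 u_s$. For $x\in\R^3\setminus\overline D$ the identity is $0=0$ for every $n$, so in the limit we obtain \eqref{usDistributionOutside} pointwise, hence distributionally. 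For $x\in D$, the right-hand side converges as a function of $x$ in $L^2(D)$ (a volume potential with $L^2$ density plus single and double layer potentials restricted to $D$, each a bounded operator into $H^1(D)\subset L^2(D)$), and since it equals $\varphi_n\to u_s$ in $L^2(D)$, the two limits agree in $L^2(D)$, giving \eqref{usDistributionInside} in the distribution sense. (The right-hand side is in fact continuous on $D$, in agreement with $u_s\in H^2_{loc}(D)$ by interior elliptic regularity, so equality holds pointwise in $D$ too, but the weaker statement is all that is claimed.)

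The main obstacle is really facts (i) and (ii) about $V(D)$: the continuity of the generalised normal-derivative trace and the density of $C^\infty(\overline D)$ in the graph norm. This is exactly where the hypothesis $(\Delta+k^2)u_s\in L^2(D)$ does its work — without it $\partial_\nu u_s$ has no meaning and the formula is vacuous — and it is the step I would cite carefully (both are standard for smooth bounded domains; for density one can, e.g., split $u_s$ into the $H^2(D)$ solution of the Dirichlet problem with datum $(\Delta+k^2)u_s$ and a harmonic remainder approximated by interior dilations). Once this input is in place, everything else is routine bookkeeping, the only genuine subtlety being the distinction, noted above, between pointwise convergence in $x$ (legitimate for the right-hand side, a layer or volume potential evaluated away from its singular set) and the merely $L^2$/distributional convergence of $\varphi_n$ to $u_s$ on the left.
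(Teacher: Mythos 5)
Your proposal is correct and follows essentially the same route as the paper: approximate $u_s$ by smooth functions in the graph norm of $v\mapsto\|v\|_{H^1(D)}+\|(\Delta+k^2)v\|_{L^2(D)}$, apply the classical Green representation of Lemma~\ref{fundamentalSolution} to each approximant, and pass to the limit in the volume potential and the two layer potentials with $x$ fixed off $\partial D$. If anything, you are more careful than the paper about the two nontrivial inputs --- the density of $C^\infty(\overline D)$ in the graph-norm space and the weak-Green-identity definition of $\partial_\nu u_s\in H^{-1/2}(\partial D)$, which the paper glosses over by citing mollification and by writing $\partial_\nu:H^1(D)\to H^{-1/2}(\partial D)$ without mentioning that the Laplacian must also be controlled.
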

\begin{proof}
  We will prove only the first case, namely $x\in D$. The second one
  follows similarly. Let $(\varphi_j)_{j=0}^\infty$ be a sequence of
  smooth functions defined on $\overline D$ such that
  \[
  \norm{u_s - \varphi_j}_{H^1(D)} +
  \norm{(\Delta+k^2)(u_s-\varphi_j)}_{L^2(D)} \to 0
  \]
  as $j\to\infty$. Such a sequence exists, for example by convolving
  $u_s$ with a mollifier $\psi_\varepsilon$, as in $\varphi_j = (u_s
  \ast \psi_{1/j})_{|\overline{D}}$.

  We have $\Phi(x,y) = \Psi(x-y)$ for $\Psi(z) =
  \exp(ik\abs{z})/(4\pi\abs{z})$ which is locally integrable in
  $\R^3$. Hence the first term in the right-hand side of
  \eqref{usDistributionInside}, equal to $\Psi \ast (\Delta+k^2)u_s$,
  can be approximated by $\Psi\ast(\Delta+k^2)\varphi_j$ in the
  $L^2(D)$-sense.

  For any $x\in D$ the second integral in \eqref{usDistributionInside}
  is well defined because $y\mapsto\Phi(x,y)$ and
  $y\mapsto\partial_\nu\Phi(x,y)$ are smooth on the smooth manifold
  $\partial D$. Moreover the $x$-dependence is smooth, so the mapping
  \[
  u_s \mapsto \int_{\partial D} u_s(y) \partial_\nu\Phi(x,y) ds(y)
  \]
  is bounded $H^1(D) \to H^{1/2}(\partial D) \to C^0(D)$ and similarly
  \[
  u_s \mapsto \int_{\partial D} \Phi(x,y)\partial_\nu u_s(y)\big)
  ds(y)
  \]
  is bounded $H^1(D) \to H^{-1/2}(\partial D) \to C^0(D)$ when the
  integral is interpreted as a distrubion pairing between a
  $H^{-1/2}(\partial D)$-function and a test function. The continuity
  does not necessarily hold up to the boundary. Because $\varphi_j \to
  u_s$ in $H^1(D)$ and the trace operators map $\operatorname{Tr} :
  H^1(D) \to H^{1/2}/D)$, $\partial_\nu : H^1(D) \to H^{-1/2}(\partial
  D)$, so the boundary integrals with $u_s$ replaced by $\varphi_j$
  converge to the corresponding ones in $C^0(D)$, namely uniformly
  over compact subsets of $D$.

  In conclusion, for a test function $\psi\in C^\infty_0(D)$ we have
  \begin{align*}
    \langle u_s, \psi\rangle &= \lim_{j\to\infty} \langle \varphi_j,
    \psi\rangle \\ &= \lim_{j\to\infty} \Bigg\langle -\int_D
    \Phi(x,y)(\Delta+k^2)\varphi_j(y) dy \\ &\phantom{=
      \lim_{j\to\infty}\Bigg\langle} + \int_{\partial D} \big(
    \Phi(x,y)\partial_\nu \varphi_j(y) - \varphi_j(y)
    \partial_\nu\Phi(x,y) \big) ds(y), \psi(x) \Bigg\rangle_x \\ &=
    \Bigg\langle -\int_D \Phi(x,y)(\Delta+k^2)u_s(y) dy
    \\ &\phantom{=\Bigg\langle} + \int_{\partial D} \big(
    \Phi(x,y)\partial_\nu u_s(y) - u_s(y) \partial_\nu\Phi(x,y) \big)
    ds(y), \psi(x) \Bigg\rangle_x \\
  \end{align*}
  so the equality holds in $\mathscr D'(D)$.
\end{proof}

\begin{proposition} \label{prop:usRep}
  Let $\Omega\subset\R^3$ be a screen, $k\in\R_+$ and $\Phi$ the
  fundamental solution from Lemma~\ref{fundamentalSolution}. Let $u_s
  \in H^1_{loc}(\R^3\setminus\overline\Omega)$. If $(\Delta+k^2)u_s=0$
  in $\R^3\setminus\overline\Omega$ and it satisfies the Sommerfeld
  radiation condition, then
  \begin{equation} \label{usRepresentation}
    u_s(x) = \int_{\R^2} \Phi(x,y^0) (\partial_3 u_s^+ - \partial_3
    u_s^-)(y^0) dy'
  \end{equation}
  for $x\in\R^3\setminus\overline\Omega$. Also $y'\mapsto (\partial_3
  u_s^+ - \partial_3 u_s^-)(y^0)$ is in $\widetilde
  H^{-1/2}(\Omega_0)$, and more precisely the integral above
  represents the distribution pairing of a $\widetilde
  H^{-1/2}(\Omega_0)$-function with the smooth test function $\Phi$
  restricted to $\R^2\times\{0\}$ on the $y$-variable.
\end{proposition}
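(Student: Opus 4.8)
The plan is to apply Lemma~\ref{H1integrationByParts} on a large ball with a thin slab around the screen removed, let the slab shrink, and identify the resulting boundary contributions. Concretely, fix $x\in\R^3\setminus\overline\Omega$ and pick $R>0$ so large that $x\in B_R$ and $\overline\Omega\subset B_R$. For small $\varepsilon>0$ let $D_\varepsilon = B_R \setminus \overline{S_\varepsilon}$, where $S_\varepsilon$ is a smooth neighbourhood of the screen of ``thickness'' $\varepsilon$, chosen so that $D_\varepsilon$ has smooth boundary and $u_s\in H^1(D_\varepsilon)$ with $(\Delta+k^2)u_s = 0$ there. Since $x\notin\overline\Omega$ we may assume $x\notin\overline{D_\varepsilon^{\,c}}$ fails, i.e. $x\in D_\varepsilon$ for $\varepsilon$ small, so \eqref{usDistributionInside} gives
\[
u_s(x) = \int_{\partial D_\varepsilon} \big( \Phi(x,y)\partial_\nu u_s(y) - u_s(y)\partial_\nu \Phi(x,y)\big)\, ds(y)
\]
in the distribution sense in $x$, the volume term vanishing because $(\Delta+k^2)u_s=0$ on $D_\varepsilon$.

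Next I would split $\partial D_\varepsilon$ into the outer sphere $S_R = \partial B_R$ and the part $\Sigma_\varepsilon$ hugging the screen, and analyze each as $\varepsilon\to 0$. The sphere term: because $u_s$ satisfies the Sommerfeld radiation condition and the Helmholtz equation outside a ball, one shows in the usual way (applying Green's representation on the annulus between $S_R$ and a larger sphere $S_{R'}$, then letting $R'\to\infty$ and using that the contribution of $S_{R'}$ tends to zero by the radiation condition, as in \cite{CK}) that the integral over $S_R$ reproduces $u_s(x)$ itself when $x$ is outside $B_R$ and equals $0$-versus-$u_s(x)$ bookkeeping is handled by Lemma~\ref{fundamentalSolution}; the upshot is that the $S_R$ contribution is $x$-independent of $\varepsilon$ and, combined correctly, does not survive — more precisely, re-deriving on the exterior domain $\R^3\setminus(\overline{B_R}\cup\text{slab})$ shows the $S_R$ integrals from inside and outside cancel. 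The screen term: on $\Sigma_\varepsilon$ the normal is essentially $\pm e_3$, and as $\varepsilon\to 0$ the surface collapses onto $\Omega$ from the two sides. Since the two-sided traces of $u_s$ on $\Omega$ agree (this is part of the definition of the scattering problem, cf.\ the discussion after Definition~\ref{directScat}), the two copies of the $u_s\,\partial_\nu\Phi$ term cancel, while the two copies of $\Phi\,\partial_\nu u_s$ add up with a sign to give exactly $\Phi(x,y^0)\big(\partial_3 u_s^+ - \partial_3 u_s^-\big)(y^0)$ integrated over $y'\in\R^2$. This yields \eqref{usRepresentation}.

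It remains to identify the function space. The jump $\rho = (\partial_3 u_s^+ - \partial_3 u_s^-)(\cdot^0)$ is a difference of one-sided conormal derivatives of $H^1$ functions on domains with smooth boundary, hence lies in $H^{-1/2}$ of the relevant boundary pieces; patching the two sides, $\rho\in H^{-1/2}(\R^2)$. For the support statement I would argue that away from $\overline{\Omega_0}$ the function $u_s$ is (locally) $H^1$ across the plane $\R^2\times\{0\}$ — there is no screen there, so $u_s$ solves $(\Delta+k^2)u_s=0$ in a full neighbourhood and the interior trace from above and below, together with the normal derivatives, match, forcing $\rho=0$ on $\R^2\setminus\overline{\Omega_0}$. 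Therefore $\operatorname{supp}\rho\subset\overline{\Omega_0}$, i.e. $\rho\in\widetilde H^{-1/2}(\Omega_0)$, and the integral in \eqref{usRepresentation} is the pairing of this distribution against $y'\mapsto\Phi(x,y^0)$, which is smooth for $x\notin\R^2\times\{0\}$, and by continuity/density for $x$ in the plane but off $\overline{\Omega_0}$.

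The main obstacle I expect is the careful justification of the limiting argument on $\Sigma_\varepsilon$: one must choose the shrinking neighbourhood of the screen so that its boundary stays smooth (the screen $\Omega_0$ has a boundary, so $\Sigma_\varepsilon$ must ``wrap around'' the edge $\partial\Omega_0$), control the contribution of that curved edge portion and show it vanishes as $\varepsilon\to 0$, and make sense of all the boundary integrals as $H^{-1/2}$–$H^{1/2}$ distributional pairings uniformly in $\varepsilon$ so that the limit can be taken inside. This is exactly the kind of technical bookkeeping that \cite{Stephan} handles and that the proposition promises to spell out; the functional-analytic heart is that $\partial_\nu: H^1 \to H^{-1/2}(\partial D)$ is bounded and the pairing against the smooth kernel $\Phi(x,\cdot)$ is continuous, as already used in Lemma~\ref{H1integrationByParts}.
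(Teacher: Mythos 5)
Your decomposition is genuinely different from the paper's. You excise a shrinking tubular neighbourhood $S_\varepsilon$ of the screen from a large ball and pass to the limit $\varepsilon\to 0$; the paper instead applies Lemma~\ref{H1integrationByParts} to two \emph{fixed} domains --- a smooth bounded $D$ chosen so that $\Omega\subset\partial D$ with interior normal $e_3$ there, and $B(x,R)\setminus\overline D$ --- and simply adds \eqref{usDistributionInside} and \eqref{usDistributionOutside}. In the paper's setup the one-sided derivatives $\partial_3 u_s^\pm$ are directly the $H^{-1/2}(\partial D)$ conormal traces supplied by Lemma~\ref{H1integrationByParts}, the $u_s\,\partial_\nu\Phi$ terms cancel by equality of the two-sided traces, the jump $\partial_\nu^D u_s-\partial_\nu^{D_c}u_s$ vanishes off $\overline\Omega$ by interior regularity (which is also exactly how the paper gets $\operatorname{supp}\rho\subset\overline{\Omega_0}$), and the sphere term dies as $R\to\infty$ by Sommerfeld. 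No limiting surface is needed at all.

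The gap in your version is precisely the step you defer to the end: the limit of the boundary integrals over $\Sigma_\varepsilon$ as the slab collapses, in particular over the portion of $\Sigma_\varepsilon$ that wraps around the edge $\partial\Omega_0$. For screen problems $\nabla u_s$ is singular at the edge (generically like $\operatorname{dist}^{-1/2}$), so the integrand on the edge-wrapping cap is unbounded while its area is only $O(\varepsilon)$; showing this contribution vanishes, and that the pairings $\langle\partial_\nu u_s,\Phi(x,\cdot)\rangle$ on the moving surfaces $\Sigma_\varepsilon$ converge to the pairing of $\rho\in\widetilde H^{-1/2}(\Omega_0)$ with $\Phi(x,\cdot^0)$, requires quantitative edge regularity that neither the hypothesis $u_s\in H^1_{loc}(\R^3\setminus\overline\Omega)$ nor anything in your argument provides. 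Since this is the crux of your route and you explicitly leave it as ``the main obstacle,'' the proof is not complete as written; the paper's choice of a fixed domain with $\Omega$ on its boundary is exactly the device that makes this issue disappear. A smaller point: your discussion of the outer sphere is garbled (``the $S_R$ integrals from inside and outside cancel''); the clean statement is that for $x\in B_R$ the representation on the exterior of $B_R$, together with the radiation condition killing the contribution of $S_{R'}$ as $R'\to\infty$, forces the $S_R$ integral to vanish --- equivalently, as in the paper, one lets $R\to\infty$ in the final identity. That part is repairable; the edge limit is the real missing piece.
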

\begin{proof}
  Fix $x\in\R^3\setminus\overline\Omega$. Let $D\subset\R^3$ be a
  bounded domain with smooth boundary for which $x\in D$ and
  $\Omega\subset\partial D$ and furthermore we want this set to be
  \emph{on top} of $\Omega$, namely that its boundary normal pointing
  to the interior at $\Omega$ is $e_3$ and not $-e_3$. Let $R >
  \sup_{z\in D} \abs{x-z}$. We will use the formulas of
  Lemma~\ref{H1integrationByParts} on $D$, which has $\Omega$ on its
  boundary, and $B(x,R)\setminus\overline D$.

  Firstly note that since $(\Delta+k^2)u_s=0$ only the boundary
  integrals on the right-hand sides of \eqref{usDistributionInside}
  and \eqref{usDistributionOutside} remain. We will see the first
  integral as is, namely
  \begin{equation} \label{usDin}
    u_s(x) = \int_{\partial D} \big( \Phi(x,y)\partial_\nu^D u_s(y) -
    u_s(y) \partial_\nu^D \Phi(x,y) \big) ds(y),
  \end{equation}
  where we denote by $\partial_\nu^D$ the internal boundary normal
  derivative of $D$, applied to functions on $D$. We will have the
  integrals in \eqref{usDistributionOutside} to be over the set
  $B(x,R)\setminus\overline D$. The boundary of this set is $S(x,r)
  \cup \partial D$, and the boundary normal pointing to its interior
  is $-e_3$ on $\Omega \subset \partial (B(x,R)\setminus\overline
  D)$. We will split the boundary integral accordingly, and in the
  integral over $\partial D$ we denote by $\partial_\nu^{D_c}$ the
  \emph{external} boundary normal derivative applied to function on
  $B(x,R)\setminus\overline D$. In conclusion
  \eqref{usDistributionOutside} becomes
  \begin{align}
    0 &= \int_{S(x,R)} \big( \Phi(x,y)\partial_\nu u_s(y) - u_s(y)
    \partial_\nu\Phi(x,y) \big) ds(y) \notag \\ &\qquad +
    \int_{\partial D} \big( \Phi(x,y)(-\partial_\nu^{D_c}) u_s(y) -
    u_s(y) (-\partial_\nu^{D_c})\Phi(x,y) \big) ds(y). \label{usDout}
  \end{align}
  Finally, by interior elliptic regularity we see that $u_s$ is
  continuous (in fact real analytic) in some neighbourhood of
  $x$. Also, because $x$ is outside of $\partial D$ and $S(x,R)$, the
  individual boundary integrals above are continuous. Hence the
  equality in the sense of distributions is in fact a pointwise
  equality for continuous functions. In other words, both of
  \eqref{usDin} and \eqref{usDout} hold as continuous functions. We
  still remind that the integrals involving $\partial_\nu u_s$
  represent distribution pairings for an element of $H^{-1/2}(\partial
  D)$ with that of a smooth $\Phi$.

  Let us add \eqref{usDin} and \eqref{usDout}. By smoothness,
  $\partial_\nu^D \Phi = \partial_\nu^{D_c} \Phi$. Note that two-sided
  Sobolev traces of $H^1$-functions yield identical resuts, so the
  integrals of $u_s\partial_\nu^D\Phi$ and $u_s\partial_\nu^{D_c}\Phi$
  in \eqref{usDin} and \eqref{usDout} cancel out. The sum then gives
  \begin{align}
    u_s(x) &= \int_{S(x,R)} \big( \Phi(x,y)\partial_\nu u_s(y) -
    u_s(y) \partial_\nu\Phi(x,y) \big) ds(y) \notag \\ &\qquad +
    \int_{\partial D} \Phi(x,y) \big( \partial_\nu^D u_s -
    \partial_\nu^{D_c} u_s\big)(y) ds(y). \label{usDboth}
  \end{align}
  Note that as $R \rightarrow \infty$ the first integral in
  \eqref{usDboth} vanishes because $u_s$ satisfies the Sommerfeld
  radiation condition. Also, $u_s$ is $C^1$ outside of
  $\overline\Omega$ by elliptic interior regularity, so the second
  integral's integrand is zero when $y\notin\overline\Omega$. Thus,
  letting $R\to\infty$ gives
  \[
  u_s (x) = \int_{\Omega} \Phi(x,y) \big(\partial_\nu^D u_s -
  \partial_\nu^{D_c} u_s)(y) dy
  \]
  which implies the claim as $\partial_\nu^D u_s = \partial_3 u_s^+$
  and $\partial_\nu^{D_c} u_s = \partial_3 u_s^-$ on $\Omega
  \subset\R^2\times\{0\}$. Furthermore, as above, since $u_s$ is $C^1$
  outside of $\overline\Omega$, we see that $\partial_3 u_s^+ -
  \partial_3 u_s^- = 0$ outside of $\overline\Omega$, so the integrand
  in the statement is in $\widetilde H^{-1/2}(\Omega_0)$, as claimed.
\end{proof}

\bigskip
With the proposition above, we are almost ready to prove the formula
for the far-field of a wave scattered by a screen,
Theorem~\ref{usFarField}. But first let us prove a lemma.

\begin{lemma} \label{C1convergence}
  Let $k\in\R_+$ and $K\subset\R^3$ be a nonempty compact set. Then
  \[
  \lim_{r\to\infty} \sup_{\abs{x}=r} \sup_{y\in K} \abs{x} \abs{
    \partial_y^\alpha \left( \frac{e^{ik\abs{x-y}}}{\abs{x-y}} -
    \frac{e^{ik\abs{x}}}{\abs{x}} e^{-ik\hat x\cdot y} \right) } = 0
  \]
  for any multi-index $\alpha\in\N^3$ with $\abs{\alpha}\leq
  1$. Recall that $\hat x = x/\abs{x}$.
\end{lemma}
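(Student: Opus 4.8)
The plan is to prove Lemma~\ref{C1convergence} by reducing to a Taylor-expansion estimate of $|x-y|$ for fixed $y$ in the compact set $K$ as $|x|=r\to\infty$, then differentiating under the obvious smooth dependence on $y$. First I would fix notation: write $r=|x|$, $\hat x = x/r$, and expand
\[
  |x-y| = r\sqrt{1 - 2\hat x\cdot y/r + |y|^2/r^2} = r - \hat x\cdot y + O(1/r),
\]
uniformly for $y\in K$, with an analogous uniform expansion for the $y$-gradient $\nabla_y|x-y| = -(x-y)/|x-y| = -\hat x + O(1/r)$. The key observation is that the function $g(x,y) = e^{ik|x-y|}/|x-y|$ and its comparison function $h(x,y) = e^{ikr}e^{-ik\hat x\cdot y}/r$ agree to leading order: both behave like $e^{ikr}e^{-ik\hat x\cdot y}/r$, and the point is to show the difference, after multiplying by $r$, tends to $0$ together with its first $y$-derivatives.

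The main steps, in order: (1) For $\alpha = 0$, write $r\,g(x,y) - r\,h(x,y) = \frac{r}{|x-y|}e^{ik|x-y|} - e^{ikr}e^{-ik\hat x\cdot y}$. Split this as $\big(\frac{r}{|x-y|}-1\big)e^{ik|x-y|} + \big(e^{ik|x-y|} - e^{ikr}e^{-ik\hat x\cdot y}\big)$. The first bracket is $O(1/r)$ uniformly on $K$ since $|x-y| = r + O(1)$. For the second bracket, factor out $e^{ikr}$ to get $e^{ikr}\big(e^{ik(|x-y|-r)} - e^{-ik\hat x\cdot y}\big)$, and since $|x-y| - r = -\hat x\cdot y + O(1/r)$ uniformly, the Lipschitz bound $|e^{ia}-e^{ib}|\le|a-b|$ gives $O(1/r)$. (2) For $|\alpha|=1$, differentiate in $y_j$. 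We have $\partial_{y_j} g(x,y) = \big(ik - \frac{1}{|x-y|}\big)\frac{-(x_j-y_j)}{|x-y|}\,\frac{e^{ik|x-y|}}{|x-y|}$ and $\partial_{y_j} h(x,y) = -ik\hat x_j\,\frac{e^{ikr}e^{-ik\hat x\cdot y}}{r}$. The prefactor $\big(ik - \frac{1}{|x-y|}\big)\frac{-(x_j-y_j)}{|x-y|}$ converges uniformly on $K$ to $-ik\hat x_j$ with error $O(1/r)$, using $\frac{x_j-y_j}{|x-y|} = \hat x_j + O(1/r)$; combining this with step (1) (which controls $r$ times the remaining factor $\frac{e^{ik|x-y|}}{|x-y|}$ versus $\frac{e^{ikr}e^{-ik\hat x\cdot y}}{r}$) and the triangle inequality — adding and subtracting a hybrid term with the correct prefactor but the old exponential factor, or vice versa — gives the claim. (3) Collect the $O(1/r)$ bounds, note they are uniform over $|x|=r$ and $y\in K$ because $K$ is compact (so $|y|$ and its powers are bounded), and take $\sup_{|x|=r}\sup_{y\in K}$ followed by $r\to\infty$.

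I expect the only mildly delicate point to be bookkeeping the uniformity: every error term must be shown to be bounded by $C(K)/r$ with $C(K)$ independent of the direction $\hat x$ and of $y\in K$, which follows because all the expansions of $|x-y|$, $1/|x-y|$, and $(x_j-y_j)/|x-y|$ have remainders controlled purely in terms of $|y|\le \operatorname{diam}(K) + \operatorname{dist}(0,K)$ and $r$. There is no genuine obstacle here; the lemma is a routine stationary-comparison estimate, and the compactness of $K$ together with the explicit form of $\Phi$ makes all the constants manifestly uniform. The one thing to be careful about is that we only need $|\alpha|\le 1$, so we never have to differentiate the $1/|x-y|$ singularity-type terms more than once, keeping every expression of the schematic form (bounded prefactor converging to the right limit) $\times$ ($r$ times an oscillatory factor of size $1/r$).
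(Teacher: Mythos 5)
Your proposal is correct and follows essentially the same route as the paper's proof: for $\abs{\alpha}=1$ you use the same key estimates ($\tfrac{x-y}{\abs{x-y}}=\hat x+O(1/r)$ uniformly on $K$, the $\abs{x}/\abs{x-y}^2\to0$ term, and reduction of the remaining piece to the $\abs{\alpha}=0$ case), merely organized as a ``prefactor times oscillatory factor'' add-and-subtract rather than the paper's explicit three-term expansion of $\nabla_y$ of the difference. The only substantive difference is that you prove the $\abs{\alpha}=0$ base case directly via the uniform expansion $\abs{x-y}=r-\hat x\cdot y+O(1/r)$ and the Lipschitz bound on $e^{i(\cdot)}$, whereas the paper simply cites Colton--Kress for it; your version is slightly more self-contained and equally valid.
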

\begin{proof}
  The case of $\abs{\alpha}=0$ is well known, see for example the
  proof of Theorem~2.5 in \cite{CK}. For $\abs{\alpha}=1$ we will
  instead show the equivalent statement with $\partial_y^\alpha$
  replaced by $\nabla_y$. Recall the following differentiation rules
  \begin{itemize}
  \item $\nabla_y \abs{x-y}^s = -s \frac{x-y}{\abs{x-y}}
    \abs{x-y}^{s-1}$ for all $s\in\R$,
  \item $\nabla_y e^{ik\abs{x-y}} = -i k \frac{x-y}{\abs{x-y}}
    e^{ik\abs{x-y}}$, and
  \item $\nabla_y e^{-ik\hat x\cdot y} = -ik\hat x e^{-ik\hat x \cdot
    y}$.
  \end{itemize}
  These imply that
  \begin{align*}
    &\nabla_y \left( \frac{e^{ik\abs{x-y}}}{\abs{x-y}} -
    \frac{e^{ik\abs{x}}}{\abs{x}} e^{-ik\hat x\cdot y} \right)
    \\ &\qquad = -ik \frac{x-y}{\abs{x-y}}
    \frac{e^{ik\abs{x-y}}}{\abs{x-y}} + \frac{x-y}{\abs{x-y}}
    \frac{e^{ik\abs{x-y}}}{\abs{x-y}^2} + ik\hat x
    \frac{e^{ik\abs{x}}}{\abs{x}} e^{-ik\hat x \cdot y} \\ &\qquad =
    -ik \left( \frac{x-y}{\abs{x-y}} - \hat x \right)
    \frac{e^{ik\abs{x-y}}}{\abs{x-y}} - ik\hat x \left(
    \frac{e^{ik\abs{x-y}}}{\abs{x-y}} - \frac{e^{ik\abs{x}}}{\abs{x}}
    e^{-ik\hat x \cdot y} \right) \\ &\qquad \quad +
    \frac{x-y}{\abs{x-y}} \frac{e^{ik\abs{x-y}}}{\abs{x-y}^2}.
  \end{align*}
  Let us consider the three types of terms above. To prove the
  estimate, let us take the absolute value and multiply by
  $\abs{x}$. The last one gives
  \[
  \abs{x} \abs{\frac{x-y}{\abs{x-y}}
    \frac{e^{ik\abs{x-y}}}{\abs{x-y}^2} } =
  \frac{\abs{x}}{\abs{x-y}^2} \to 0
  \]
  uniformly as $y\in K$, $\abs{x}=r$ and $r\to\infty$. The first term
  gives
  \[
  \abs{x} \abs{ -ik\left( \frac{x-y}{\abs{x-y}} - \hat x\right)
    \frac{e^{ik\abs{x-y}}}{\abs{x-y}} } = k \frac{\abs{x}}{\abs{x-y}}
  \abs{ \frac{x-y}{\abs{x-y}} - \frac{x}{\abs{x}} }
  \]
  where can still estimate
  \[
  \abs{\frac{x-y}{\abs{x-y}} - \frac{x}{\abs{x}}} =
  \abs{\frac{x-y}{\abs{x-y}} \frac{\abs{x}-\abs{x-y}}{\abs{x}} -
    \frac{y}{\abs{x}}} \leq \frac{\abs{\abs{x}-\abs{x-y}}}{\abs{x}} +
  \frac{\abs{y}}{\abs{x}} \leq 2 \frac{\abs{y}}{\abs{x}}
  \]
  because $\abs{\abs{x}-\abs{x-y}} \leq \abs{y}$ by the triangle
  inequality. Thus the first term also tends to zero uniformly as
  $r\to\infty$. Lastly, the second one is estimated as
  \[
  \abs{x}\abs{-ik\hat x \left( \frac{e^{ik\abs{x-y}}}{\abs{x-y}} -
    \frac{e^{ik\abs{x}}}{\abs{x}} e^{-ik\hat x\cdot y} \right) } = k
  \abs{x} \abs{\frac{e^{ik\abs{x-y}}}{\abs{x-y}} -
    \frac{e^{ik\abs{x}}}{\abs{x}} e^{-ik\hat x\cdot y} }
  \]
  which tends to zero uniformly because this is the case
  $\abs{\alpha}=0$ proven at the beginning of this proof.
\end{proof}

\begin{proof}[Proof of Theorem~\ref{usFarField}]
  By the definition of the far-field there is a finite constant $C>0$
  independent of $x$ such that
  \[
  \abs{ u_\infty(\hat x) - \abs{x}e^{-ik\abs{x}} u_s(x)} \leq
  \frac{C}{\abs{x}}
  \]
  when $\abs{x}\to\infty$. Let us denote $\rho(y') = (\partial_3 u_s^+
  - \partial_3 u_s^-)(y^0)$. Then \eqref{usRepresentation} gives
  \[
  u_s^\infty(\hat x) = \lim_{\abs{x}\to\infty} \abs{x}e^{-ik\abs{x}}
  \big\langle \rho(y'), \Phi(x,y^0) \big\rangle_{y'}
  \]
  should the limit exist. The distribution pairing is over
  $y'\in\R^2$. We can rewrite
  \begin{align*}
    &\abs{x}e^{-ik\abs{x}} \big\langle \rho(y'), \Phi(x,y^0)
    \big\rangle \\ &\qquad = \left\langle \rho(y'), \abs{x}
    e^{-ik\abs{x}} \Phi(x,y^0) - e^{-ik\hat x\cdot
      y^0}/(4\pi)\right\rangle_{y'} \\ &\qquad \quad + \frac{1}{4\pi}
    \big\langle \rho(y'), e^{-ik\hat x\cdot y^0} \big\rangle_{y'}.
  \end{align*}
  We can write the $C^1$-test function on the second line as
  \begin{align*}
    &\abs{x} e^{-ik\abs{x}} \Phi(x,y^0) - e^{-ik\hat x\cdot
      y^0}/(4\pi) \\ &\qquad = \frac{e^{-ik\abs{x}} \abs{x}}{4\pi}
    \left( \frac{e^{ik\abs{x-y^0}}}{\abs{x-y^0}} -
    \frac{e^{ik\abs{x}}}{\abs{x}} e^{-ik\hat x\cdot y^0} \right)
  \end{align*}
  which convergest to zero in the $C^1$ topology over $y'$, and a
  fortiori $y^0$, restricted to any compact set by
  Lemma~\ref{C1convergence}. Note that the $C^1$-seminorms are taken
  with respect to the $y'$-variable, and the absolute value makes the
  $e^{-ik\abs{x}}$ that doesn't appear in the lemma disappear. Hence
  the application of the lemma is allowed. Elements of $\widetilde
  H^{-1/2}(\Omega_0)$ act well on $C^1$-functions, so the distribution
  pairing with $\rho$ and the test function tends to zero. Thus
  \[
  \lim_{\abs{x}\to\infty} \abs{x}e^{-ik\abs{x}} \big\langle \rho(y'),
  \Phi(x,y^0) \big\rangle_{y'} = \frac{1}{4\pi} \left\langle \rho(y'),
  e^{-ik\hat x\cdot y^0} \right\rangle_{y'}
  \]
  as claimed.
\end{proof}

\section{Solving the inverse problem} \label{sect:inverse}
We are ready to tackle the inverse problem in this section.

\begin{lemma} \label{rhoDetermination}
  Let $k\in\R_+$ and $\rho\in\mathscr E'(\R^2)$ be a distribution of
  compact support. Let\footnote{If $\rho$ is integrable then
    $u^\infty_s(\hat x) = \frac{1}{4\pi} \int_{\R^2} e^{-ik\hat x\cdot
      y^0} \rho(y') dy'$.}
  \begin{equation} \label{generalRho2farField}
    u^\infty_s(\hat x) = \frac{1}{4\pi} \left\langle \rho, e^{-ik\hat
      x\cdot y^0} \right\rangle
  \end{equation}
  for $\hat x\in\mathbb S^2$ and where the distribution pairing is
  over the variable $y'=(y_1,y_2)\in\R^2$. Then $\rho$ is uniquely
  determined by $u^\infty_s$.
\end{lemma}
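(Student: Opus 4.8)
The plan is to identify the right-hand side of \eqref{generalRho2farField} as a restriction of the Fourier transform of $\rho$, and then to propagate that information to all frequencies by analyticity. For $\hat x=(\hat x_1,\hat x_2,\hat x_3)\in\mathbb S^2$ and $y^0=(y_1,y_2,0)$ we have $\hat x\cdot y^0=\hat x_1y_1+\hat x_2y_2=(\hat x_1,\hat x_2)\cdot y'$, so the test function $e^{-ik\hat x\cdot y^0}$ depends on $\hat x$ only through the projection $(\hat x_1,\hat x_2)$. Writing $\widehat\rho(\xi)=\langle\rho,e^{-i\xi\cdot y'}\rangle$ for $\xi\in\R^2$, which is the Fourier transform of the compactly supported distribution $\rho$, equation \eqref{generalRho2farField} reads
\[
4\pi\,u^\infty_s(\hat x)=\widehat\rho\big(k(\hat x_1,\hat x_2)\big),\qquad \hat x\in\mathbb S^2 .
\]
As $\hat x$ ranges over $\mathbb S^2$ the point $k(\hat x_1,\hat x_2)$ ranges over the closed disk $\overline{B(0,k)}\subset\R^2$ (each interior point being hit from both hemispheres), which has nonempty interior; hence $u^\infty_s$ determines $\widehat\rho$ on the open ball $B(0,k)$.

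Next I would establish that $\widehat\rho$ is real-analytic on all of $\R^2$. This is the standard Paley--Wiener--Schwartz argument: since $\rho\in\mathscr E'(\R^2)$ there are $C>0$, $N\in\N$ and a compact $K\subset\R^2$ with $\abs{\langle\rho,\varphi\rangle}\le C\sum_{\abs{\beta}\le N}\sup_K\abs{\partial^\beta\varphi}$ for all test functions $\varphi$. Differentiating under the pairing gives $\partial_\xi^\alpha\widehat\rho(\xi)=\langle\rho,(-iy')^\alpha e^{-i\xi\cdot y'}\rangle$, and applying the seminorm bound together with the Leibniz rule yields Cauchy-type estimates $\abs{\partial_\xi^\alpha\widehat\rho(\xi)}\le C'\,\alpha!\,R^{\abs{\alpha}}$, locally uniformly in $\xi$, with $R$ depending only on $K$. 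These give real-analyticity of $\widehat\rho$ on $\R^2$ (in fact $\widehat\rho$ extends to an entire function on $\C^2$, but real-analyticity on $\R^2$ is all that is needed).

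Finally, because $\R^2$ is connected and $\widehat\rho$ is real-analytic, its restriction to the nonempty open set $B(0,k)$ determines it on all of $\R^2$: if $\rho_1,\rho_2\in\mathscr E'(\R^2)$ produce the same $u^\infty_s$, then $\widehat{\rho_1}-\widehat{\rho_2}$ is real-analytic on $\R^2$ and vanishes on $B(0,k)$, hence vanishes identically, and Fourier inversion for tempered distributions gives $\rho_1=\rho_2$. This shows $\rho$ is uniquely determined by $u^\infty_s$. The only point requiring any care — and it is routine — is the justification of differentiation under the distribution pairing and the resulting analyticity bound; the heart of the matter is simply that the Fourier transform of a compactly supported distribution, being real-analytic, is determined by its values on any ball, so I do not expect a genuine obstacle.
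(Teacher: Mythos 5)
Your proposal is correct and follows essentially the same route as the paper: rewrite $\hat x\cdot y^0=(\hat x_1,\hat x_2)\cdot y'$ to identify $4\pi u_s^\infty$ with the Fourier transform $\widehat\rho$ restricted to the disk of radius $k$, then invoke the analyticity of the Fourier transform of a compactly supported distribution (Paley--Wiener--Schwartz) to conclude that vanishing on an open set forces $\widehat\rho\equiv 0$, hence $\rho=0$ by linearity. No gaps; your added detail on differentiating under the pairing is more explicit than the paper's one-line appeal to entirety, but the argument is the same.
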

\begin{proof}
  The operator mapping $\rho \mapsto u^\infty_s$ is bounded and linear
  $\mathcal E'(\R^2) \to C^0(\mathbb S^2)$. This is because $\hat x
  \mapsto \big( y' \mapsto \exp(-ik\hat x\cdot y^0)\big)$ is
  continuous $\mathbb S^2 \to \mathcal E(\R^2)$. So it is enough to
  show that $\rho=0$ if $u^\infty_s=0$. Let us assume the latter. For
  $\xi'\in\R^2$ we have
  \[
  \hat \rho(\xi') = \frac{1}{2\pi} \left\langle \rho, 
  e^{-i\xi'\cdot y'} \right\rangle
  \]
  where the distribution pairing is over the variable
  $y'\in\R^2$. This looks similar to the formula
  \eqref{generalRho2farField} in the statement. We can rewrite
  \[
  {-ik\hat x\cdot y^0} = {-i k(\hat x_1, \hat x_2, \hat x_3)\cdot
    (y_1,y_2,0)} = {- i (k\hat x_1,k\hat x_2) \cdot (y_1,y_2)}.
  \]
  Thus
  \begin{equation}
    u^\infty_s(\hat x) = \frac{1}{2} \hat \rho(k\hat x_1, k\hat x_2).
  \end{equation}
  The left-hand side is zero for all $\hat x\in\mathbb S^2$. When
  $\hat x$ goes through the whole of $\mathbb S^2$, the sum including
  only two of the squares, $\hat x_1^2 + \hat x_2^2$, goes through the
  whole interval $(0,1)$. Alternatively
  \[
  \hat \rho(\xi') = 2 u^\infty_s \left(\xi_1/k, \xi_2/k, \sqrt{k^2 -
    \xi_1^2 + \xi_2^2}/k \right) = 0
  \]
  for all $\abs{\xi'} \leq k$. Since $\rho$ has compact support, $\hat
  \rho$ can be extended to an entire function on $\C^2$. Since it
  vanishes on an open subset of $\R^2$ it must be the zero
  function. Hence $u^\infty_s = 0$ implies $\rho=0$.
\end{proof}

\begin{lemma} \label{OmegaDetermination}
  Let $(\Delta+k^2)u_i=0$ in $\R^3$. Let $\Omega\subset\R^3$ be a
  screen and $u_s$ satisfy the direct scattering problem
  \ref{directScat}. Denote
  \[
  \rho(x') = \partial_3 u_s^+(x^0) - \partial_3 u_s^-(x^0)
  \]
  for $x'\in\R^2$ and its properties are given in
  Proposition~\ref{prop:usRep}. If $u_i(x',x_3) \neq -u_i(x',-x_3)$
  for some $x\in\R^3$ then
  \begin{equation}
    \overline{\Omega_0} = \operatorname{supp} \rho
  \end{equation}
  for the shape $\Omega_0$ of the screen $\Omega$.
\end{lemma}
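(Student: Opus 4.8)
The plan is to establish the two inclusions $\operatorname{supp}\rho \subseteq \overline{\Omega_0}$ and $\overline{\Omega_0} \subseteq \operatorname{supp}\rho$ separately. The first inclusion is essentially free: Proposition~\ref{prop:usRep} already tells us that $\rho \in \widetilde H^{-1/2}(\Omega_0)$, which by definition means $\operatorname{supp}\rho \subseteq \overline{\Omega_0}$. So the entire content is the reverse inclusion, and I would argue it by contradiction. Suppose $\operatorname{supp}\rho$ is a proper closed subset of $\overline{\Omega_0}$. Since $\Omega_0$ is a domain (connected open set), there is then a nonempty open set $V \subseteq \Omega_0$ with $V \cap \operatorname{supp}\rho = \emptyset$; on $V$ we have $\rho = 0$, i.e.\ $\partial_3 u_s^+ = \partial_3 u_s^-$ on $V^0 \subset \R^2\times\{0\}$.

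Next I would exploit the symmetry of the problem across the plane. Write $u_s^{\mathrm{odd}}(x',x_3) = \tfrac12\big(u_s(x',x_3) - u_s(x',-x_3)\big)$ and $u_s^{\mathrm{even}}$ analogously, and similarly split $u_i$. On $\Omega$ the Dirichlet condition gives $u_s = -u_i$ in the trace sense, and on $V \subseteq \Omega_0$ we additionally have the matching of normal derivatives $\partial_3 u_s^+ = \partial_3 u_s^-$, which says precisely that the \emph{odd-in-$x_3$} part of $u_s$ has vanishing Cauchy data on $V^0$: indeed $u_s^{\mathrm{odd}} = 0$ on $V^0$ follows from $u_s^+ = u_s^- = -u_i$ there (the traces agree, so the odd part of the trace is zero), while $\partial_3 u_s^{\mathrm{odd}}$ restricted to $V^0$ equals $\tfrac12(\partial_3 u_s^+ + \partial_3 u_s^-)$... here I need to be careful with signs of the one-sided derivatives, but the upshot is that one of the two symmetry-parts of $u_s$ has both its trace and its normal derivative vanishing on the open patch $V^0$ of the plane. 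Since that part solves $(\Delta+k^2)(\cdot) = 0$ in a neighborhood of $V^0$ in $\R^3$ (away from $\operatorname{supp}\rho$, $u_s$ is real-analytic and solves Helmholtz across the plane), Holmgren's uniqueness theorem — or unique continuation for the analytic-coefficient operator $\Delta+k^2$ — forces that symmetry-part of $u_s$ to vanish identically on the connected component of $\R^3\setminus\operatorname{supp}\rho$ containing $V^0$, hence (by analyticity and connectedness of the exterior of a compact planar set in $\R^3$) everywhere outside $\operatorname{supp}\rho$.

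From here I would propagate the consequence back to $u_i$. Having shown that, say, $u_s^{\mathrm{odd}} \equiv 0$ off $\operatorname{supp}\rho$ (the even case is symmetric), I take the trace onto all of $\Omega \setminus \operatorname{supp}\rho$ — which is nonempty since $V \subseteq \Omega_0 \setminus \operatorname{supp}\rho$ — and use $u_s = -u_i$ there to conclude $u_i^{\mathrm{odd}} = 0$ on an open subset of the plane; but $u_i$ is an entire solution of Helmholtz in all of $\R^3$ (real-analytic), so $u_i^{\mathrm{odd}}$, also entire, vanishes identically, i.e.\ $u_i(x',x_3) = u_i(x',-x_3)$ for all $x$. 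That contradicts the hypothesis that $u_i$ is not antisymmetric... wait — I need to match this to the precise hypothesis "$u_i(x',x_3) \neq -u_i(x',-x_3)$ for some $x$", i.e.\ $u_i^{\mathrm{even}} \not\equiv 0$. So I must arrange the argument so that the part of $u_s$ killed by the Cauchy-data vanishing is the \emph{even} part, forcing $u_i^{\mathrm{even}} \equiv 0$, the contradiction. Getting this parity bookkeeping exactly right — tracking whether $\rho = \partial_3 u_s^+ - \partial_3 u_s^-$ vanishing means the even or the odd part of $u_s$ has null Cauchy data, given that $\partial_3 u_s^-$ is a one-sided derivative with a built-in sign — is the main delicate point, and I expect it to be where the real work lies; the unique-continuation step itself is standard once the Cauchy data are correctly identified.
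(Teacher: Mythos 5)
Your setup (the easy inclusion from Proposition~\ref{prop:usRep}, the contradiction via an open patch $V\subset\Omega_0$ with $\rho=0$ on $V$, and the intent to use unique continuation) matches the paper's strategy, but the central step fails as stated: \emph{neither} symmetry part of $u_s$ has vanishing Cauchy data on $V^0$, and no sign bookkeeping will produce one. The odd part $u_s^{\mathrm{odd}}$ has zero trace on the whole plane (the two one-sided traces of $u_s$ always agree), but its normal derivative on $V^0$ is $\tfrac12(\partial_3 u_s^+ + \partial_3 u_s^-)$, the \emph{average} of the one-sided derivatives, which the hypothesis $\rho=0$ (a statement about the \emph{difference}) does not control. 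The even part $u_s^{\mathrm{even}}$ has normal derivative $\tfrac12\rho=0$ on $V^0$, but its trace there is $-u_i$, not zero in general. The missing ingredient is the representation formula of Proposition~\ref{prop:usRep}: since $\Phi(x,y^0)$ depends on $x_3$ only through $x_3^2$, that formula forces $u_s(x',x_3)=u_s(x',-x_3)$, i.e.\ $u_s^{\mathrm{odd}}\equiv0$; hence $\partial_3u_s^+=-\partial_3u_s^-$, and $\rho=0$ on $V$ then gives $\partial_3u_s^\pm=0$ on $V^0$. The function with vanishing Cauchy data on $V^0$ is $u_s+\tilde u_i$, where $\tilde u_i(x)=\tfrac12\big(u_i(x',x_3)+u_i(x',-x_3)\big)$ is the \emph{even part of the incident wave} (equivalently one may use $u_s^{\mathrm{even}}+\tilde u_i$ without first proving $u_s$ even). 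This is exactly what the paper establishes, by computing all Taylor coefficients $\partial^\alpha u_s=-\partial^\alpha\tilde u_i$ at the base of the tube; your Holmgren route is fine once this correct function is identified.

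The endgame also needs repair. After unique continuation you get $u_s=-\tilde u_i$ on the tube, hence by analyticity on a connected three-dimensional open set reaching out to infinity. The contradiction then comes from the \emph{Sommerfeld radiation condition}: $\tilde u_i$ would be an entire solution of the Helmholtz equation satisfying the radiation condition, hence identically zero, so $u_i$ would be antisymmetric, contradicting the hypothesis. Your proposed conclusion --- that a symmetry part of $u_i$ vanishes on an open subset of the plane and therefore vanishes identically ``by analyticity'' --- does not work: a real-analytic function on $\R^3$ can vanish on a two-dimensional patch without vanishing elsewhere (and $u_i^{\mathrm{odd}}$ vanishes on the entire plane for free, so that statement carries no information). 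To salvage it you would again need the full Cauchy data of $\tilde u_i$ on the patch plus Holmgren, or, as the paper does, the radiation condition.
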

\begin{proof}
  The function $\rho$ is a well-defined $H^{-1/2}(\Omega_0)$-function
  by Proposition~\ref{prop:usRep} so in particular
  $\operatorname{supp}\rho \subset\overline{\Omega_0}$. It remains to
  prove that $\overline{\Omega_0} \subset \operatorname{supp} \rho$.

  Assume the contrary, that $\overline{\Omega_0}$ is not contained in
  the support of $\rho$. Then neither is $\Omega_0$ because if
  $\Omega_0\subset\operatorname{supp}\rho$ then $ \overline{\Omega_0}
  \subset \overline{\operatorname{supp}\rho} =
  \operatorname{supp}\rho$. Because $\Omega_0$ is an open set and
  $\operatorname{supp}\rho$ is closed there is $x'_0\in\Omega_0$ and
  $r>0$ such that $B(x'_0,r) \subset \Omega_0 \setminus
  \operatorname{supp}\rho$.

  Let us study the behaviour of $u_s$ in the tube $B(x'_0,r) \times
  \R$. We have $\rho = 0$ on $B(x'_0,r)$. Recall formula
  \eqref{usRho}, which combined with the vanishing of $\rho$ implies
  that $(\Delta+k^2)u_s=0$ in the whole tube, and interior elliptic
  regularity implies that $u_s$ is smooth there. In addition the
  formula implies that $u_s(x_1,x_2,x_3) = u_s(x_1,x_2,-x_3)$ for all
  $x$ in the tube. The vanishing of $\rho$ gives $\partial_3 u_s^+ =
  \partial_3 u_s^-$ on the base of the tube. These two imply that
  actually $\partial_3 u_s(x',0) = 0$ for $x'\in B(x'_0,r)$.

  We have the following
  \begin{align}
    u_s &= -u_i, \label{us0}\\
    \partial_3 u_s &= 0 \label{us1}
  \end{align}
  on $B(x'_0,r)\times\{0\}$. Let us calculate the higher order
  derivatives. Note that $\partial_3^j$ and $(\Delta+k^2)$ commute,
  and $(\Delta+k^2)u_s = 0$ in the tube. Thus
  \[
  0 = \partial_3^j(\Delta+k^2)u_s = (\Delta+k^2)\partial_3^j u_s =
  (\Delta'+k^2)\partial_3^j u_s + \partial_3^{j+2} u_s
  \]
  in the tube, and we denote $\Delta' = \partial_1^2 +
  \partial_2^2$. This gives $\partial_3^{j+2} u_s = -
  (\Delta'+k^2)\partial_3^j u_s$. Let us restrict ourselves to
  $B(x'_0,r)\times\{0\}$ next. By induction and
  \eqref{us0}--\eqref{us1} we see that
  \[
  \partial_3^j u_s = \begin{cases}
    (-1)^{j+1} (\Delta'+k^2)^j u_i, &j\in2\N,\\
    0, &j\in2\N+1
  \end{cases}
  \]
  on $B(x'_0,r)\times\{0\}$. This can still be simplified! Recall that
  $u_i$ is an incident wave, so $(\Delta+k^2)u_i=0$ everywhere. This
  means that $(\Delta'+k^2) u_i = -\partial_3^2 u_i$, and a fortiori
  $(\Delta'+k^2)^j u_j = (-\partial_3^2)^j u_i$ everywhere by the
  commutating of $\partial_3^2$ and $(\Delta'+k^2)$. This implies
  \begin{equation}
    \partial_3^j u_s = \begin{cases}
      - \partial_3^j u_i, &j\in2\N,\\
      0, &j\in2\N+1.
    \end{cases}
  \end{equation}
  The other derivatives, $\partial_1$ and $\partial_2$ commute with
  each other and $\partial_3$, so finally we have
  \begin{equation} \label{usAlpha}
    \partial^\alpha u_s = \begin{cases}
      - \partial^\alpha u_i, &\alpha_3\in2\N,\\
      0, &\alpha_3\in2\N+1
    \end{cases}
  \end{equation}
  on $B(x'_0,r)\times\{0\}$ for all multi-indices $\alpha\in\N^3$.

  Let us define
  \[
  \tilde u_i(x) = \frac{1}{2} \big( u_i(x_1,x_2,x_3) +
  u_i(x_1,x_2,-x_3)\big)
  \]
  for all $x\in\R^3$. This satisfies the Helmholtz equation
  everywhere, and is an incident wave because $u_i$ is one. We see
  that
  \[
  \partial^\alpha \tilde u_i(x) = \frac12 \big( \partial^\alpha
  u_i(x_1,x_2,x_3) + (-1)^{\alpha_3} \partial^\alpha u_i(x_1,x_2,-x_3)
  \big)
  \]
  so
  \begin{equation} \label{uiTildeAlpha}
    \partial^\alpha \tilde u_i = \begin{cases} \partial^\alpha u_i,
      &\alpha_3\in2\N,\\ 0, &\alpha_3\in2\N+1
    \end{cases}
  \end{equation}
  on $B(x'_0,r)\times\{0\}$. By \eqref{usAlpha} we see immediately
  that $\partial^\alpha u_s = - \partial^\alpha \tilde u_i$ on the
  base of the tube for all $\alpha\in\N^3$. Both functions $u_s$ and
  $-\tilde u_i$ satisfy the Helmholtz equation not only in the tube
  but also in $\R^3 \setminus \overline B(0,R)$, where $R>0$ is large
  enough that $\overline\Omega \subset B(0,R)$. Solutions of the
  Helmholtz equation are real-analytic. Because their
  Taylor-expansions at $(x'_0,0)$ are equal, the functions are equal
  in the component of $\big(B(x'_0,r)\times\R\big) \cup
  \big(\R^3\setminus\overline B(0,R)\big)$ that contains $(x'_0,0)$,
  so in particular $u_s= -\tilde u_i$ in all of $\R^3\setminus
  \overline B(0,R)$.

  The function $u_s$ satisfies the Sommerfeld radiation condition, so
  so does $\tilde u_i$. On the other hand $(\Delta+k^2)\tilde u_i=0$
  in all of $\R^3$, so $\tilde u_i$ is the zero function\footnote{Use
    e.g. \eqref{greenRepresentation} for a larbe ball whose radius
    grows to infinity. The boundary integral decreases to zero as was
    seen for the first integral in \eqref{usDboth}.}, which means that
  $u_i$ is antisymmetric with respect to $\R^2\times\{0\}$, a
  contradiction. Hence $\overline{\Omega_0} \subset
  \operatorname{supp}\rho$.
\end{proof}

\bigskip
The solution to the inverse problem of determining a screen $\Omega$
from the knowledge of a single incident wave $u_i$ and the
corresponding far-field $u_s^\infty$ scattered from the screen comes
from a combination of determining $\rho$ from the far-field, and then
$\Omega$ from $\rho$. There is a slight suprise, namely that the
problem is only solvable for incident waves that are not too
(anti)symmetric. However, one sees that antisymmetry is not the
deciding factor: what matters is whether $u_i$ is identically zero on
the screen. By a similar argument as that at the end of the proof of
Lemma~\ref{OmegaDetermination}, we see that if $u_i=0$ on a non-empty
open subset of $\R^2\times\{0\}$ then $u_i(x',x_3) = -u_i(x',-x_3)$
for all $x\in\R^3$. It is interesting to see that partial invisibility
is achieved inside thickened screens as long as the incident plane
wave comes from a direction almost parallel to the screen's normal
\cite{DLU}. The direction of incident waves seems very important in
scattering from objects that are thin in one direction.
\begin{proof}[Proof of Theorem~\ref{inverseSolution}]
  Theorem~\ref{usFarField} and Lemma~\ref{rhoDetermination} imply that
  $\rho=\tilde\rho$ when $u_s^\infty = \tilde u_s^\infty$. If $u_i$ is
  not antisymmetric with respect to $\R^2\times\{0\}$ then
  \[
  \overline{\Omega_0} = \operatorname{supp} \rho = \operatorname{supp}
  \tilde\rho = \overline{\tilde\Omega_0}
  \]
  by Lemma~\ref{OmegaDetermination}. Because $\Omega_0$ is a smooth
  domain, we have $\Omega_0=\operatorname{int}\overline{\Omega_0}$,
  and similarly for $\tilde\Omega_0$. Thus the equation above implies
  $\Omega_0=\tilde\Omega_0$ and by lifting, $\Omega=\tilde\Omega$.

  If $u_i$ is antisymmetric then $u_i=0$ everywhere on
  $\R^2\times\{0\}$ and $u_s=0$ satisfies all conditions of the direct
  scattering problem. Since solutions to the direct scattering problem
  \eqref{directScat} are unique by \cite[Thms~2.5--2.7]{Stephan}, this
  is the only solution. Thus $u_s=\tilde u_s=0$ and the same holds for
  their far-fields. This is irrespective of the shape of
  $\Omega,\tilde\Omega\subset\R^2$.
\end{proof}

\section*{Acknowledgements}
The research of the authors is supported by Estonian Research Council
grant PRG832. We also thank Markku Lehtinen for useful discussions.


\begin{thebibliography}{999}
\bibitem{Maxwell} Maxwell, J.C.  On physical lines of force.
  \emph{Philos. Mag.} \textbf{1861}, \emph{90}, 11--23.

\bibitem{SU} Sylvester, J.; Uhlmann, G.  A global uniqueness
  theorem for an inverse boundary value problem. \emph{Ann. Math.} 
  \textbf{1987}, \emph{125}, 153--169.

\bibitem{Uhl} Uhlmann, G.  \emph{ Inside Out: Inverse Problems and
  Applications}; Cambridge University Press: Cambridge, UK,~2003.

\bibitem{Bukhgeim}  Bukhgeim, A.L. Recovering a potential from
  Cauchy data in the two-dimensional case. \emph{J.~Inverse~Ill-Posed~Probl.}  \textbf{2008}, \emph{16}, 19--33.

\bibitem{GT}  Guillarmou, C.;  Tzou, L.  Calder\'{o}n inverse
  problem with partial data on Riemann surfaces. \emph{Duke Math. J.}  \textbf{2011}, \emph{158}, 83--120.

\bibitem{IUY} Imanuvilov, O.Y.; Uhlmann, G.; Yamamoto,M. The
  Calder\'{o}n problem with partial data in two dimensions.
  \emph{J.~Am.~Math.~Soc.} {\bf } \textbf{2010}, \emph{23}, 655--691.

\bibitem{DSFKS} Dos~Santos~Ferreira, D.; Kenig, C.E. Salo, M.  
  Determining an unbounded potential from Cauchy data in admissible
  geometries. \emph{Comm. Part. Differ. Eq.} \textbf{2013}, \emph{38}, 50--68.

\bibitem{BIY}  Bl\aa sten, E.;  Imanuvilov, O.Y.; Yamamoto, M.  
  Stability and uniqueness for a two-dimensional inverse boundary
  value problem for less regular potentials. \emph{Inverse Probl. Imaging} \textbf{2015}, \emph{9}, 709--723.

\bibitem{BTW} Bl{\r a}sten, E.; Tzou, L.;  Wang, J.  Uniqueness
  for the inverse boundary value problem with singular potentials in
  2D. \emph{Math. Z.} published online \textbf{2019}, doi:10.1007/s00209-019-02436-0.

\bibitem{CK} Colton, D.;  Kress, R.   Inverse acoustic and
  electromagnetic scattering theory. In \emph{Applied Mathematical Sciences}; Springer: Berlin/Heidelberg, Germany, 1992; Volume 93.

\bibitem{LP} Lax, P.; Phillips, R.  \emph{Scattering Theory}; Academic
  Press: New York, NY, USA;  London, UK, 1967.
  
\bibitem{CKsampling}  Colton, D.;  Kirsch, A. A simple method for
  solving inverse scattering problems in the resonance region.
  \emph{Inverse Probl.} \textbf{1996}, \emph{12}, 383--393.

\bibitem{KG} Kirsch A.; Grinberg, N. \emph{The Factorization
  Method for Inverse Problems}; Oxford Lecture Series in Mathematics
  and Its Applications; Oxford University Press: Oxford, UK, 2008; Volume 36.

\bibitem{AH} Alves, C.J.S.; Ha-Duong, T.  On inverse
  scattering by screens. \emph{Inverse Probl.} \textbf{1997},  \emph{13}, 1161--1176.

\bibitem{CCD} Cakoni, F.; Colton, D.; Darrigrand, E,  The inverse
  electromagnetic scattering problem for screens. \emph{Inverse~Probl.}
 \textbf{2003}, \emph{19}, 627--642.

\bibitem{CS} Colton, D.; Sleeman, B. Uniqueness theorems for
  the inverse problem of acoustic scattering. \emph{IMA~J.~Appl.~Math.}
   \textbf{1983}, \emph{31}, 253--259.

\bibitem{Isa2}  Isakov, V. \emph{Inverse Problems for Partial
  Differential Equations}, 2nd ed.; Springer: New York, NY, USA, 2006.

\bibitem{AR} Alessandrini, G.; Rondi, L.  Determining a
  sound-soft polyhedral scatterer by a single far-field measurement.
  \emph{Proc. Am. Math. Soc.} \textbf{2005}, \emph{35}, 1685--1691.

\bibitem{CY} Cheng, J.; Yamamoto, M. Uniqueness in an inverse
  scattering problem within non-trapping polygonal obstacles with at
  most two incoming waves. \emph{Inverse~Probl.}  \textbf{2003}, \emph{19}
  1361--1384.

\bibitem{EY2} Elschner, J.;  Yamamoto, M. Uniqueness in
  determining polyhedral sound-hard obstacles with a single incoming
  wave. \emph{Inverse Probl.} \textbf{2008}, \emph{24}, 035004.

\bibitem{LPRX} Liu, H.;  Petrini, M.;  Rondi, L.;  Xiao, J.  Stable
  determination of sound-hard polyhedral scatterers by a minimal
  number of scattering measurements. \emph{J. Differ. Eq.}
\textbf{2017}, \emph{262}, 1631--1670.

\bibitem{LRX} Liu, H.; Rondi, L.; Xiao, J.  Mosco convergence
  for $H(\mathrm{curl})$ spaces, higher integrability for Maxwell's
  equations, and stability in direct and inverse EM scattering
  problems. \emph{J. Eur. Math. Soc. (JEMS)} \textbf{2019},~\emph{21},~2945--2993.

\bibitem{Liu-Zou} Liu, H.; Zou, J. Uniqueness in an inverse
  acoustic obstacle scattering problem for both sound-hard and
  sound-soft polyhedral scatterers. \emph{Inverse Probl.} \textbf{2006}, \emph{22}, 515--524.

\bibitem{Ron2} Rondi, L. Stable determination of sound-soft polyhedral
  scatterers by a single measurement. \emph{Indiana~Univ.~Math.~J.}
  \textbf{2008}, \emph{57}, 1377--1408.

\bibitem{HNS} Honda, N.;  Nakamura, G.;  Sini, M.   Analytic
  extension and reconstruction of obstacles from few measurements for
  elliptic second order operators. \emph{Math. Ann.} \textbf{2013}, \emph{355}, 401--427.

\bibitem{BPS} Bl{\aa}sten, E.; P\"aiv\"arinta, L.; Sylvester, J.
  Corners always scatter. \emph{Commun. Math. Phys.} \textbf{2014},
  \emph{331}, 725--753.

\bibitem{PSV} P\"aiv\"arinta, L.; Salo, M.; Vesalainen, E.V.
  Strictly convex corners scatter. \emph{Rev. Mat. Iberoam.}
  \textbf{2017}, \emph{33}, 1369--1396.

\bibitem{Bsource} E. Bl{\r a}sten,  Nonradiating sources and
  transmission eigenfunctions vanish at corners and edges. \emph{SIAM~J.~Math.~Anal.} \textbf{2018}, \emph{50}, 6255--6270.
  
\bibitem{BL2016} Bl{\aa}sten, E.; Liu, H.   On corners scattering
  stably, nearly non-scattering interrogating waves, and stable shape
  determination by a single far-field pattern. \emph{Indiana
  Univ. Math. J.}  \textbf{2019}, in press.
  
\bibitem{BL2017} Bl{\aa}sten, E.; Liu, H.  Recovering piecewise
  constant refractive indices by a single far-field pattern. \emph{Inverse~Probl.}  \textbf{2020}, accepted, doi:10.1088/1361-6420/ab958f.

\bibitem{HSV} Hu, G.; Salo, M.; Vesalainen, E.  Shape
  identification in inverse medium scattering problems with a single
  far-field pattern. \emph{SIAM J. Math. Anal.} \textbf{2016}, \emph{48}, 152--165.

\bibitem{Ikehata}  Ikehata, M.  Reconstruction of a source domain
  from the {C}auchy data. \emph{Inverse Probl.} \textbf{1999}, \emph{15}, 637--645.

\bibitem{KS1} {Kusiak, S.; Sylvester, J. } The scattering
  support. \emph{Comm. Pure Appl. Math.}  \textbf{2003}, \emph{56}, 1525--1548.

\bibitem{KS2} Kusiak, S.; Sylvester, J. The convex scattering
  support in a background medium. \emph{SIAM J. Math. Anal.} \textbf{2005},~\emph{36}, 1142--1158.
  
\bibitem{PR0} P\"aiv\"arinta, L.; Rempel, S. A  deconvolution
  problem with the kernel $1/|x|$ on the plane. \emph{Appl. Anal.} \textbf{1987},~\emph{26},~105--128.

\bibitem{PR} P\"aiv\"arinta, L.; Rempel, S.  Corner
  singularities of solutions to $\Delta^{\pm 1/2}u=f$ in two
  dimensions. \emph{Asymptotic~Anal.} {\bf } \textbf{1992},~\emph{5},~429--460.

\bibitem{Stephan} Stephan, E.P.  Boundary integral equations for
  screen problems in ${\bf R}^3$. \emph{Integr. Eq. Oper. Theory}
\textbf{1987},~\emph{10},~236--257.

\bibitem{evans} Evans, L.C.  \emph{Partial Differential Equations},
  2nd ed.; Graduate Studies in~Mathematics;  American~Mathematical~Society: Providence, RI, USA, 2010; Volume 19.

\bibitem{DLU} Deng, Y.;  Liu, H.;  Uhlmann, G.  On regularized
  full- and partial-cloaks in acoustic scattering.\emph{ Comm.~Part.~Differ.~Eq.} \textbf{2017}, \emph{42}, 821--851.

\end{thebibliography}
\end{document}